\documentclass[10pt,twoside,openany,UTF8,CJK]{article}
\topmargin=-2.5cm
\textheight=24cm
\textwidth=16.5cm

\usepackage{amssymb,amsthm}
\usepackage[tbtags]{amsmath}
\usepackage{cite}
\usepackage{indentfirst}
\usepackage{cases}
\usepackage{graphicx}
\usepackage{subfigure}
\usepackage{lineno}
\usepackage{enumerate}
\usepackage{threeparttable}
\usepackage{multirow}
\usepackage{booktabs}
\usepackage{upgreek}  
\usepackage{bm}
\usepackage{CJK}
\usepackage{fancyhdr}
\usepackage{ifthen}
\usepackage{lipsum}
\allowdisplaybreaks[4]
\usepackage[misc]{ifsym}
\usepackage{caption}
\usepackage{float}
\usepackage{appendix}
\usepackage[format=hang]{caption}
\usepackage{algorithm,algorithmic}

\usepackage{charter}  
\usepackage{indentfirst}  
\RequirePackage{bibspacing}  
\usepackage{soul}

\newcommand{\Rmnum}[1]{\uppercase\expandafter{\romannumeral #1}} 

\usepackage[colorlinks,
linkcolor=blue,
anchorcolor=blue,
citecolor=red]{hyperref}

\newtheorem{Lemma}{Lemma}[section]
\newtheorem{Theorem}{Theorem}[section]
\newtheorem{Remark}{Remark}[section]

\newcounter{saveeqn}

\makeatletter
\oddsidemargin 0.0in
\evensidemargin
\oddsidemargin
\marginparwidth 0.0in
\makeatother

\title{\bf  On the conservation properties of the two-level linearized methods for Navier-Stokes equations}
\author{
	Xi Li,\footnote{School of Mathematics, Sichuan University, Chengdu, Sichuan 610064, China (li\_xi@stu.scu.edu.cn). The work of this author was supported by the National Natural Science Foundation of China(Grant No. 11971337).}
	\ and Minfu Feng\footnote{Corresponding author. School of Mathematics, Sichuan University, Chengdu, Sichuan 610064, China (fmf@scu.edu.cn). The work of this author was supported by the National Natural Science Foundation of China(Grant No. 11971337).}
}
\date{}


\begin{document}
	\maketitle
	\newcommand\blfootnote[1]{%
		\begingroup
		\renewcommand\thefootnote{}\footnote{#1}%
		\addtocounter{footnote}{-1}%
		\par\setlength\parindent{2em}
		\endgroup
	}
	\captionsetup[figure]{labelfont={bf},labelformat={default},labelsep=period,name={Fig.}}
	\captionsetup[table]{labelfont={bf},labelformat={default},labelsep=period,name={Tab.}}
	
	\begin{abstract}
		This manuscript is devoted to investigating the conservation laws of incompressible Navier-Stokes equations(NSEs), written in the energy-momentum-angular momentum conserving(EMAC) formulation, after being linearized by the two-level methods. With appropriate correction steps(e.g., Stoke/Newton corrections), we show that the two-level methods, discretized from EMAC NSEs, could preserve momentum, angular momentum, and asymptotically preserve energy. Error estimates and (asymptotic) conservative properties are analyzed and obtained, and numerical experiments are conducted to validate the theoretical results, mainly confirming that the two-level linearized methods indeed possess the property of (almost) retainability on conservation laws. Moreover, experimental error estimates and optimal convergence rates of two newly defined types of pressure approximation in EMAC NSEs are also obtained.   \\
		
		\noindent {\bf Keywords: }{Conservative methods; EMAC formulation; structure-preserved discretization.}\\
	\end{abstract}
	
	\baselineskip 15pt
	\parskip 10pt
	\setcounter{page}{1}
	\vspace{-0.5cm}
	\section{Introduction}
	\vspace{-0.5cm}
	We consider the following nonstationary incompressible Navier-Stokes equations(NSEs) in primitive variables with Dirichlet boundary conditions: 
	\begin{equation}\label{ContEqua}
		\setlength{\abovedisplayskip}{0.2cm}
		\begin{aligned}
			&\partial_t \boldsymbol{u} + \boldsymbol{u}\cdot\nabla\boldsymbol{u} - \nu\Delta \boldsymbol{u} + \nabla p = \boldsymbol{f},\quad\text{and}\quad\nabla\cdot \boldsymbol{u} = 0,\quad \text{in}\;\Omega\times(0,T],\\
			&\boldsymbol{u} = \boldsymbol{u}_D,\;\text{on}\;\partial\Omega,\quad\boldsymbol{u}(0,x) = \boldsymbol{u}_{0}(x),\; \text{in}\;\partial\Omega,
		\end{aligned}
	    \setlength{\belowdisplayskip}{0.2cm}
	\end{equation} 
	where $\Omega \subset \mathbb{R}^2$ be an convex polygonal domain with boundary $\partial \Omega$, and $(0,T]$ denotes time interval. It is well-known that the smooth solution of (\ref{ContEqua}) obeys a series of conservative laws, including the conservation of energy, momentum, etc., and pursuing their discrete counterparts relating to the numerical solutions, in computational fluid dynamics community, is a long-standing goal since a better adherence to those laws would lead to more physically accurate numerical solutions(e.g., see \cite{Helicity-ARFM-1992,LongerAccu-2020-CMAME}), as well as a challenge, especially if choosing continuous Galerkin(CG) discretization, the weakly enforcing of the solenoidal constraint would typically aggravate the violation of those laws. Apart from the conservation laws, the nonlinearity in NSEs also poses computational bottlenecks, making the efficient linearized methods appealing or vital in some practical or engineering applications. Given those mentioned two intrinsic natures of (\ref{ContEqua}), an efficient and conservative numerical scheme for NSEs becomes attractive and deserves more research.    \\
	\indent However, the general (linearized) numerical schemes for (\ref{ContEqua}) would damage those conservation properties possessed by continuous NSEs; for example, spatial and CG discretization, or efficiently IMEX discretization for temporal variable would alter those balances\cite{EMAC-2017-JCP,Linearized-EMAC-ANM-2019}. Therefore, much attention needs to be paid if one wants to efficiently obtain conservative numerical solutions, especially appropriate CG formulations and linearized methods are required. To preserve as many physical quantities as possible, Charnyi et al.\cite{EMAC-2017-JCP} re-writes equivalently continuous NSEs in the EMAC formulation(Energy, Momentum, and Angular momentum (EMA)-Conserving), thereby obtaining a nonlinear EMAC scheme. However, they also have pointed out that the direct IMEX linearization would damage those conservation properties\cite{Linearized-EMAC-ANM-2019}.  \\
	\indent The main purpose of this article is to design the (asymptotically) conservative and efficient CG numerical schemes for (\ref{ContEqua}). Specifically, for the NSEs in EMAC variation, we propose two (asymptotically) conservative two-level linearized continuous Galerkin discrete schemes. Convergence results about newly proposed schemes are presented, as well as the (asymptotic) conservation analysis and results for energy, momentum, and angular momentum. The main contributions are two-fold. On the one hand, we propose two preserved and linearized schemes for nonlinear EMAC inertial term; on the other hand, our schemes can also be viewed as an improvement of the classical two-level methods\cite{TwoLevel-1994-SISC} on conservative properties, which, to our knowledge, are seldom considered before. Numerical experiments validate the two-level schemes' convergence and (asymptotic) conservation properties. In addition, we give two ways of calculating errors and optimal convergence rates of pressure in EMAC NSEs, which, to the best of our knowledge, is the first time to consider the (experimental) error and convergence analysis on the pressure in the EMAC form.  \\
	\indent This draft proceeds as follows. In Section \ref{section-2}, some notation and preliminaries will be given. Section \ref{section-3} presents two-level schemes, along with error and conservation analysis. In Section \ref{section-4}, we utilize numerical experiments to validate our schemes. Throughout this draft, we use $C$ to denote a positive constant independent of $\Delta t$, $h$, not necessarily the same at each occurrence. $a \lesssim b$ means $a \leq Cb$. $L^p(\Omega)$, $W^{m,p}(\Omega)$ and $H^m(\Omega)$, $1\leq p\leq \infty$, and their inner-products and norms are defined in the standard ways\cite{Brenner2008}. Vector analogs of the Sobolev spaces and vector-valued functions are denoted by boldface letters.
	\vspace{-0.5cm}
	\section{Notation and preliminaries}\label{section-2}
	\vspace{-0.5cm}
	We consider the natural velocity and pressure spaces for NSEs, respectively, by 
	$$
	\setlength{\abovedisplayskip}{0.2cm}
	\boldsymbol{V} := \boldsymbol{H}^1_0(\Omega) = \left\{\boldsymbol{v} \in \boldsymbol{H}^1(\Omega): \;\boldsymbol{v}|_{\partial \Omega}=0\right\}, \quad
	Q := L^2_0(\Omega) = \left\{q \in L^2(\Omega): \;\int_{\Omega} q \;{\rm d} x=0\right\},
	\setlength{\belowdisplayskip}{0.2cm}
	$$
	and denote $\boldsymbol{Y} := \boldsymbol{L}^2(\Omega)$. The divergence-free subspace of $\boldsymbol{V}$ and of $\boldsymbol{Y}$ are
	$$
	\setlength{\abovedisplayskip}{0.2cm}
	\boldsymbol{V}_{\text{div}} := \left\{\boldsymbol{v} \in \boldsymbol{V}: \;\nabla\cdot\boldsymbol{v} = 0\right\}, \quad \boldsymbol{Y}_{\text{div}} := \left\{\boldsymbol{v} \in \boldsymbol{Y}: \;\nabla\cdot\boldsymbol{v} = 0, \;  \boldsymbol{v}\cdot\boldsymbol{n}|_{\partial\Omega} = 0\right\}.
	\setlength{\belowdisplayskip}{0.2cm}
	$$
	\indent Define the inertial term in convective form(Conv) $b_{conv}:\boldsymbol{V}\times\boldsymbol{V}\times\boldsymbol{V} \rightarrow \mathbb{R}$, or EMAC form(EMAC) $b_{emac}:\boldsymbol{V}\times\boldsymbol{V}\times\boldsymbol{V} \rightarrow \mathbb{R}$ as
	$$
	\setlength{\abovedisplayskip}{0.2cm}
	b_{conv}(\boldsymbol{u},\boldsymbol{v},\boldsymbol{w}) := \left((\boldsymbol{u} \cdot \nabla )\boldsymbol{v}, \boldsymbol{w}\right), \quad 
	b_{emac}(\boldsymbol{u},\boldsymbol{v},\boldsymbol{w}) := \left(2\textbf{D}(\boldsymbol{u})\boldsymbol{v}, \boldsymbol{w}\right) + \left((\nabla \cdot \boldsymbol{u})\boldsymbol{v}, \boldsymbol{w}\right),
	\setlength{\belowdisplayskip}{0.2cm}
	$$
	where $\textbf{D}(\boldsymbol{u}) := (\nabla \boldsymbol{u} + (\nabla \boldsymbol{u})^\text{T})/2$ denotes the symmetric part of $\nabla \boldsymbol{u}$. With simple calculations, we can get
	\begin{subequations}
		\setlength{\abovedisplayskip}{0.2cm}
		\begin{align}
			b_{conv}(\boldsymbol{u},\boldsymbol{v},\boldsymbol{w}) &= -b_{conv}(\boldsymbol{u},\boldsymbol{w},\boldsymbol{v}) - \left(\left(\nabla\cdot \boldsymbol{u}\right)\boldsymbol{v}, \boldsymbol{w}\right),\quad \text{for }\boldsymbol{u} \in \boldsymbol{V}, \boldsymbol{v}\text{ and }\boldsymbol{w} \in \boldsymbol{H}^1, \label{PartInte-CONV}\\
			b_{conv}(\boldsymbol{u},\boldsymbol{w},\boldsymbol{w}) &= -\frac12\left(\left(\nabla\cdot \boldsymbol{u}\right)\boldsymbol{w}, \boldsymbol{w}\right),\quad \text{for }\boldsymbol{u} \in \boldsymbol{V}, \boldsymbol{v}\text{ and }\boldsymbol{w} \in \boldsymbol{H}^1. \label{PartInte-2-CONV}
		\end{align}
	    \setlength{\belowdisplayskip}{0.2cm}
	\end{subequations}
    Thus, we can express $b_{emac}$ in terms of $b_{conv}$ to have 
    \begin{subequations}
    	\setlength{\abovedisplayskip}{0.2cm}
    	\begin{align}
    		b_{emac}(\boldsymbol{u},\boldsymbol{v},\boldsymbol{w}) &= b_{conv}(\boldsymbol{v},\boldsymbol{u},\boldsymbol{w}) + b_{conv}(\boldsymbol{w},\boldsymbol{u},\boldsymbol{v}) + \left((\nabla \cdot \boldsymbol{u})\boldsymbol{v}, \boldsymbol{w}\right), \label{Rela-2-EMAC-CONV}  \\
    		b_{emac}(\boldsymbol{u},\boldsymbol{v},\boldsymbol{w}) &= b_{conv}(\boldsymbol{v},\boldsymbol{u},\boldsymbol{w}) + b_{conv}(\boldsymbol{w},\boldsymbol{u},\boldsymbol{v}) - b_{conv}(\boldsymbol{u},\boldsymbol{v},\boldsymbol{w}) - b_{conv}(\boldsymbol{u},\boldsymbol{w},\boldsymbol{v}),  \label{Rela-EMAC-CONV}
    	\end{align}
    	\setlength{\belowdisplayskip}{0.2cm}
    \end{subequations}
    and (e.g., see \cite[(10)]{Proj-Shen-1992-NM} and \cite[Lemma 2.2]{TwoLevel-NSE-Layton-Tabiska-1998-SINUM})
    \begin{equation}\label{IneqBconv}
    	\setlength{\abovedisplayskip}{0.2cm}
    	b_{conv}(\boldsymbol{u}, \boldsymbol{v}, \boldsymbol{w}) \lesssim
    	\left\{\begin{array}{l}
    		\|\boldsymbol{u}\|_1\|\boldsymbol{v}\|_1\|\boldsymbol{w}\|_1, \\
    		\|\boldsymbol{u}\|_2\|\boldsymbol{v}\|_0\|\boldsymbol{w}\|_1, \\
    		\|\boldsymbol{u}\|_1\|\boldsymbol{v}\|_0\|\boldsymbol{w}\|_2, \\
    		\|\boldsymbol{u}\|_0\|\boldsymbol{v}\|_2\|\boldsymbol{w}\|_1,  \\
    		\|\boldsymbol{u}\|_0\|\boldsymbol{v}\|_1\|\boldsymbol{w}\|_2.
    	\end{array}
        \right.
        \setlength{\belowdisplayskip}{0.2cm}
    \end{equation}
    Therefore, based on (\ref{IneqBconv}) and (\ref{Rela-2-EMAC-CONV})-(\ref{Rela-EMAC-CONV}), we can obtain the similar estimates about $b_{emac}((\boldsymbol{u},\boldsymbol{v},\boldsymbol{w}))$ with $b_{conv}((\boldsymbol{u},\boldsymbol{v},\boldsymbol{w}))$ whose proof are omitted for simplicity.
    \begin{Lemma}\label{Lemma-IneqBemac}
    	The convection term in EMAC form defined as $b_{emac}(\boldsymbol{u},\boldsymbol{v},\boldsymbol{w}) := \left(2\textbf{D}(\boldsymbol{u})\boldsymbol{v}, \boldsymbol{w}\right) + \left((\nabla \cdot \boldsymbol{u})\boldsymbol{v}, \boldsymbol{w}\right)$ satisfies the following estimates:
    	\begin{equation}\label{IneqBemac}
    		\setlength{\abovedisplayskip}{0.2cm}
    		b_{emac}(\boldsymbol{u}, \boldsymbol{v}, \boldsymbol{w}) 
    		\lesssim \min\{\|\boldsymbol{u}\|_1\|\boldsymbol{v}\|_1\|\boldsymbol{w}\|_1, \|\boldsymbol{u}\|_0\|\boldsymbol{v}\|_2\|\boldsymbol{w}\|_1\}.
    		\setlength{\belowdisplayskip}{0.2cm}
    	\end{equation}
    \end{Lemma}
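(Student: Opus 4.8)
The plan is to start from the decomposition (\ref{Rela-EMAC-CONV}), which writes $b_{emac}(\boldsymbol{u},\boldsymbol{v},\boldsymbol{w})$ purely as the four convective pieces $b_{conv}(\boldsymbol{v},\boldsymbol{u},\boldsymbol{w})$, $b_{conv}(\boldsymbol{w},\boldsymbol{u},\boldsymbol{v})$, $-b_{conv}(\boldsymbol{u},\boldsymbol{v},\boldsymbol{w})$ and $-b_{conv}(\boldsymbol{u},\boldsymbol{w},\boldsymbol{v})$, bound each piece separately with the catalogue in (\ref{IneqBconv}), and close by the triangle inequality. I would deliberately use (\ref{Rela-EMAC-CONV}) rather than (\ref{Rela-2-EMAC-CONV}): the latter retains the raw term $\left((\nabla\cdot\boldsymbol{u})\boldsymbol{v},\boldsymbol{w}\right)$, which carries a derivative on $\boldsymbol{u}$ and so cannot be controlled by $\|\boldsymbol{u}\|_0$, whereas the all-$b_{conv}$ form is exactly what makes the low-regularity bound reachable.

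For the first bound $\|\boldsymbol{u}\|_1\|\boldsymbol{v}\|_1\|\boldsymbol{w}\|_1$ the argument is immediate. The top line of (\ref{IneqBconv}) puts the $\|\cdot\|_1$ norm on every slot, hence is invariant under any permutation of the three arguments; applying it to each of the four pieces gives $\|\boldsymbol{u}\|_1\|\boldsymbol{v}\|_1\|\boldsymbol{w}\|_1$ term by term, and summing absorbs the harmless factor of four into $\lesssim$.

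The second bound $\|\boldsymbol{u}\|_0\|\boldsymbol{v}\|_2\|\boldsymbol{w}\|_1$ is where the genuine bookkeeping lies: I would match each permuted $b_{conv}$ to the one line of (\ref{IneqBconv}) that places $\|\cdot\|_0$ on $\boldsymbol{u}$, $\|\cdot\|_2$ on $\boldsymbol{v}$, and $\|\cdot\|_1$ on $\boldsymbol{w}$. Concretely, the second line handles $b_{conv}(\boldsymbol{v},\boldsymbol{u},\boldsymbol{w})$ (two derivatives in the first slot, none in the second), the third line handles $b_{conv}(\boldsymbol{w},\boldsymbol{u},\boldsymbol{v})$, the fourth line handles $b_{conv}(\boldsymbol{u},\boldsymbol{v},\boldsymbol{w})$ directly, and the fifth line handles $b_{conv}(\boldsymbol{u},\boldsymbol{w},\boldsymbol{v})$. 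Each produces the target $\|\boldsymbol{u}\|_0\|\boldsymbol{v}\|_2\|\boldsymbol{w}\|_1$, and summing finishes the estimate.

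The only real obstacle is checking that the permutation of arguments forced by (\ref{Rela-EMAC-CONV}) lines up with one of the five available estimates. Since (\ref{IneqBconv}) is rich enough to supply a bound for every arrangement that loads the two derivatives onto $\boldsymbol{v}$ while distributing the remaining regularity as $0$ on $\boldsymbol{u}$ and $1$ on $\boldsymbol{w}$, no case is missing, and the proof reduces to this matching together with the triangle inequality. I expect no subtlety beyond confirming that each argument indeed lies in the Sobolev space the chosen line requires, in particular $\boldsymbol{v}\in\boldsymbol{H}^2$ for the second bound, which is implicit in the statement.
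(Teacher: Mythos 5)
Your proposal is correct and follows exactly the route the paper indicates (the proof is omitted there, but the text says the estimates follow from (\ref{IneqBconv}) together with (\ref{Rela-2-EMAC-CONV})--(\ref{Rela-EMAC-CONV})): decompose via the all-$b_{conv}$ identity and match each permuted piece to the appropriate line of the catalogue, and your line-by-line matching for the $\|\boldsymbol{u}\|_0\|\boldsymbol{v}\|_2\|\boldsymbol{w}\|_1$ bound checks out. Your observation that (\ref{Rela-EMAC-CONV}) rather than (\ref{Rela-2-EMAC-CONV}) must be used for the low-regularity bound, since the residual term $\left((\nabla\cdot\boldsymbol{u})\boldsymbol{v},\boldsymbol{w}\right)$ cannot be controlled by $\|\boldsymbol{u}\|_0$, is a correct and worthwhile refinement of the paper's terse pointer.
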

    \vspace{-0.5cm}
    Based on the preparations above, the continuous variational EMAC form of the Navier-Stokes equations (\ref{ContEqua}) is to find $(\boldsymbol{u}, p) \in \left(L^{\infty}(0,T; \boldsymbol{Y}_{\text{div}}) \cap L^2(0,T; \boldsymbol{V}) \right) \times L^2(0,T; Q)$ such that
    \begin{equation}\label{ContVariEqua}
    	\setlength{\abovedisplayskip}{0.2cm}
    	\begin{aligned}
    		\left(\partial_t \boldsymbol{u},\boldsymbol{v}\right) + \nu \left(\nabla \boldsymbol{u},\nabla \boldsymbol{v}\right) + b_{emac}(\boldsymbol{u},\boldsymbol{u},\boldsymbol{v}) - \left(\nabla \cdot \boldsymbol{v}, p\right) & =\left(\boldsymbol{f}, \boldsymbol{v}\right), \\
    		\left(\nabla \cdot \boldsymbol{u}, q\right) & =0.
    	\end{aligned}
        \setlength{\belowdisplayskip}{0.2cm}
    \end{equation}
    \vspace{-1.0cm}
	\subsection{Finite element spaces}
	\vspace{-0.5cm}
	Let ${\mathcal{T}_d}$ be a uniformly regular family of triangulation of $\overline{\Omega}$ where $d := \max_{K\in \mathcal{T}_d}\{d_K | d_K := \text{diam}(K)\}$(here $d=H,h$ with $h < H$). The fine mesh ${\mathcal{T}_h}$, for simplicity, can be thought of as refined from the coarse mesh ${\mathcal{T}_H}$ by some mesh refinement methods. The conforming finite element spaces $(\boldsymbol{V}_d, Q_d) \subset (\boldsymbol{V}, Q)$($d=H,h$) are considered and therefore there holds $(\boldsymbol{V}_H, Q_H) \subset (\boldsymbol{V}_h, Q_h)$. We remark that this inclusion condition is not necessary for two-level methods, and it is assumed here only for analytical brevity. Moreover, these two space-pairs are assumed to satisfy the following compatibility conditions and approximation properties (where $d=H, h$): for some $\beta$ independent of $d(H$ or $h)$, there exists
	\begin{equation}\label{InfSupCond}
		\setlength{\abovedisplayskip}{0.2cm}
		\inf\limits_{q_d \in Q_d}\sup\limits_{\boldsymbol{v}_d \in \boldsymbol{V}_d} \frac{(\nabla\cdot \boldsymbol{v}_d, q_d)}{\|q_d\|_0\|\nabla \boldsymbol{v}_d\|_0} \geq \beta > 0, 
		\setlength{\belowdisplayskip}{0.2cm}
    \end{equation}
	and for $\forall \boldsymbol{v} \in \boldsymbol{H}^2(\Omega) \cap \boldsymbol{V}$, $q \in H^1(\Omega) \cap Q$; and there exist approximations $\Pi_d \boldsymbol{v} \in \boldsymbol{V}_d$, $\Pi_d q \in Q_d$ such that
	\begin{equation}\label{ApproProper_FEspace}
		\setlength{\abovedisplayskip}{0.2cm}
		\|\boldsymbol{v} - \Pi_d \boldsymbol{v}\|_1 \lesssim d\|\boldsymbol{v}\|_2, \quad \|q - \Pi_d q\|_0 \lesssim d\|q\|_1,
		\setlength{\belowdisplayskip}{0.2cm}
	\end{equation}
    where $\Pi_d:L^2 \rightarrow V_d$ is the classical $L^2$-projection defined as
    $$
    \setlength{\abovedisplayskip}{0.2cm}
    (v - \Pi_dv, w_d) = 0, \quad\forall w_d \in V_d.
    \setlength{\belowdisplayskip}{0.2cm}
    $$
    The inverse inequality holds, which is implied by the shape-regular of the mesh
    \begin{equation}\label{InveIneq}
    	\setlength{\abovedisplayskip}{0.2cm}
    	\|\boldsymbol{v}_d\|_1 \lesssim d^{-1} \|\boldsymbol{v}_d\|_0, \quad \forall \boldsymbol{v}_d \in \boldsymbol{V}_d,
    	\setlength{\belowdisplayskip}{0.2cm}
    \end{equation}
    and Poincaré-Friedrichs inequality can be obtained by homogeneity of its Sobolev space
    \begin{equation}\label{PF-ineq}
    	\setlength{\abovedisplayskip}{0.2cm}
    	\|\boldsymbol{v}\|_0 \lesssim\|\boldsymbol{v}\|_1, \quad \forall \boldsymbol{v} \in \boldsymbol{V}.
    	\setlength{\belowdisplayskip}{0.2cm}
    \end{equation}
    \vspace{-0.8cm}
	\section{Numerical scheme and analysis}\label{section-3}
	\vspace{-0.5cm}
    \subsection{One-Level EMAC schemes}
    \vspace{-0.5cm}
    We firstly discrete the continuous variational form (\ref{ContVariEqua}) on one-single mesh ${\mathcal{T}_h}$. For the sake of explanation, we adopt backward Euler finite difference(BDF1) to discretize temporal variable, and inf-sup stable mixed finite element pair $(\boldsymbol{V}_h, Q_h)$ in space. For the given time $T$, we adopt a uniform partition of the time interval $[0,T]$ with some fixed time-step $\Delta t$, i.e., $t_n:=n\Delta t$ where $0 \leq n \leq N :=\lfloor T/\Delta t\rfloor$ where the symbol $\lfloor\cdot\rfloor$ denotes the rounding-down operation. The one-level numerical scheme about EMAC form (\ref{ContVariEqua}) states as: Given $\boldsymbol{u}^0_h = \Pi_h\boldsymbol{u}_0$, for $0 \leq n \leq N-1$, finding $(\boldsymbol{u}^{n+1}_h,p^{n+1}_h) \in \boldsymbol{V}_h \times Q_h$, such that
    \begin{equation}\label{OneLevel-EMAC-Nonlinear}
    	\begin{aligned}
    		\left(\frac{\boldsymbol{u}^{n+1}_h - \boldsymbol{u}^{n}_h}{\Delta t}, \boldsymbol{v}_h\right) &+ \nu (\nabla\boldsymbol{u}^{n+1}_h,\nabla\boldsymbol{v}_h) + b_{emac}(\boldsymbol{u}^{n+1}_h,\boldsymbol{u}^{n+1}_h,\boldsymbol{v}_h) \\
    		& + (\nabla\cdot \boldsymbol{v}_h,p^{n+1}_h) - (\nabla\cdot \boldsymbol{u}^{n+1}_h,q_h) = (\boldsymbol{f}^{n+1},\boldsymbol{v}_h).
    	\end{aligned}
    \end{equation}
    \vspace{-0.5cm}
	\subsection{Two-Level EMAC schemes}
	\vspace{-0.5cm}
	We here introduce the two-level linearized scheme of (\ref{ContVariEqua}) based on Stokes and Newton corrections, and we call it EMAC-TwoLevel(Stokes) and EMAC-TwoLevel(Newton) respectively. Given time $T$, let $\boldsymbol{u}^0_h = \Pi_h \boldsymbol{u}_0$, $\boldsymbol{u}^0_H = \Pi_H \boldsymbol{u}_0$, then for some positive integer $n$($0 \leq n \leq N$), the EMAC-TwoLevel(Stokes) is achieved by the following two steps:  \\
    \noindent\textbf{\text{Step \Rmnum{1}: }}
    \begin{minipage}[t]{0.9\linewidth} 
    Find a coarse-mesh solution $(\boldsymbol{u}^{n+1}_H, p^{n+1}_H) \in (\boldsymbol{X}_H, Q_H)$ by solving the nonlinear equations (\ref{OneLevel-EMAC-Nonlinear}).
    \end{minipage} 
    \vspace{0.0cm} \\
    \noindent\textbf{\text{Step \Rmnum{2}:}}	
    \begin{minipage}[t]{0.9\linewidth}
    Find a fine-mesh solution $(\boldsymbol{u}^{n+1}_h, p^{n+1}_h) \in (\boldsymbol{X}_h, Q_h)$ by solving the following linearized Stokes equations  
    \begin{equation}\label{TwoLevel-EMAC-Stokes-Step2}
    \begin{aligned}
    	\left(\frac{\boldsymbol{u}^{n+1}_h - \boldsymbol{u}^{n}_h}{\Delta t}, \boldsymbol{v}_h\right) 
    	& + \nu (\nabla\boldsymbol{u}^{n+1}_h,\nabla\boldsymbol{v}_h) + b_{emac}(\boldsymbol{u}^{n+1}_H,\boldsymbol{u}^{n+1}_H,\boldsymbol{v}_h)   \\
    	& + (\nabla\cdot \boldsymbol{v}_h,p^{n+1}_h) - (\nabla\cdot \boldsymbol{u}^{n+1}_h,q_h) = (\boldsymbol{f}^{n+1},\boldsymbol{v}_h).
    \end{aligned}
    \end{equation}	
    \end{minipage}	
    	
	\indent The EMAC-TwoLevel(Newton) is given by:  \\
	\noindent\textbf{\text{Step \Rmnum{1}: }}
	\begin{minipage}[t]{0.9\linewidth} 
		Find a coarse-level solution $(\boldsymbol{u}^{n+1}_H, p^{n+1}_H) \in (\boldsymbol{X}_H, Q_H)$ by solving the nonlinear equations (\ref{OneLevel-EMAC-Nonlinear}).
	\end{minipage} 
	\vspace{0.0cm} \\
	\noindent\textbf{\text{Step \Rmnum{2}:}}	
	\begin{minipage}[t]{0.9\linewidth}
		Find a fine-level solution $(\boldsymbol{u}^{n+1}_h, p^{n+1}_h) \in (\boldsymbol{X}_h, Q_h)$ by solving the following linearized Stokes equations  
		\begin{equation}\label{TwoLevel-EMAC-Newton-Step2}
			\begin{aligned}
				\!\!\!\!\!\!\!\!\left(\frac{\boldsymbol{u}^{n+1}_h - \boldsymbol{u}^{n}_h}{\Delta t}, \boldsymbol{v}_h\right) 
				& + \nu (\nabla\boldsymbol{u}^{n+1}_h,\nabla\boldsymbol{v}_h) + b_{emac}(\boldsymbol{u}^{n+1}_H,\boldsymbol{u}^{n+1}_h,\boldsymbol{v}_h) + b_{emac}(\boldsymbol{u}^{n+1}_h,\boldsymbol{u}^{n+1}_H,\boldsymbol{v}_h)   \\
				& + (\nabla\cdot\boldsymbol{v}_h ,p^{n+1}_h) - (\nabla\cdot \boldsymbol{u}^{n+1}_h,q_h) = (\boldsymbol{f}^{n+1},\boldsymbol{v}_h) + b_{emac}(\boldsymbol{u}^{n+1}_H,\boldsymbol{u}^{n+1}_H,\boldsymbol{v}_h).
			\end{aligned}
		\end{equation}	
	\end{minipage}	
    \vspace{-0.5cm}
	\subsection{Error estimate}
	\vspace{-0.5cm}
	In this subsection, we will derive the convergence results of TwoLevel(Stokes) EMAC scheme (\ref{OneLevel-EMAC-Nonlinear}), (\ref{TwoLevel-EMAC-Stokes-Step2}) and TwoLevel(Newton) EMAC scheme (\ref{OneLevel-EMAC-Nonlinear}), (\ref{TwoLevel-EMAC-Newton-Step2}) to validate \emph{a prior} these two numerical schemes' convergence(as $h,H \rightarrow 0$). To this end, we first give the error estimate for the One-Level EMAC scheme (\ref{OneLevel-EMAC-Nonlinear}), and for the proof of this result, we have noticed that Lemma \ref{Lemma-IneqBemac} shows the similar upper bounds of EMAC form as its counterpart of Conv form, which suggests that similar estimate techniques can be used in the error estimate of One-Level EMAC scheme (\ref{OneLevel-EMAC-Nonlinear}) and thereby obtaining the similar results; therefore, we omit those proofs owing to space constraints which will be found in \cite[Theorem 3.1]{Heywood-Rannacher-1982-SINUM-1} after complementing the analysis of temporal truncation error without essential difficulties.
	\begin{Theorem}[Convergence for OneLevel EMAC]\label{Theorem-OneLevel-EMAC-Conv}
		Under the conditions (\ref{InfSupCond})-(\ref{PF-ineq}), the One-Level EMAC scheme (\ref{OneLevel-EMAC-Nonlinear}) admits a unique solution $\boldsymbol{u}^{n}_H$ for $\forall n$. Moreover, it holds that
		\begin{equation}
			\setlength{\abovedisplayskip}{0.2cm}
			\|\boldsymbol{u}^{n} - \boldsymbol{u}^{n}_H\|_0 + H\|\nabla(\boldsymbol{u}^{n} - \boldsymbol{u}^{n}_H)\|_0 \lesssim H^2 + \Delta t.
		    \setlength{\belowdisplayskip}{0.2cm}
    	\end{equation}
	\end{Theorem}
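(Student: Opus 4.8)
The plan is to mirror the classical spatial-error analysis of \cite{Heywood-Rannacher-1982-SINUM-1}, inserting the backward-Euler truncation error and replacing the skew-symmetry of the convective form by the structural identities of $b_{emac}$ recorded in (\ref{Rela-2-EMAC-CONV})--(\ref{Rela-EMAC-CONV}). The first ingredient is the exact energy identity $b_{emac}(\boldsymbol{w},\boldsymbol{w},\boldsymbol{w})=0$ for all $\boldsymbol{w}\in\boldsymbol{V}$: taking $\boldsymbol{u}=\boldsymbol{v}=\boldsymbol{w}$ in (\ref{Rela-2-EMAC-CONV}) gives $b_{emac}(\boldsymbol{w},\boldsymbol{w},\boldsymbol{w})=2b_{conv}(\boldsymbol{w},\boldsymbol{w},\boldsymbol{w})+((\nabla\cdot\boldsymbol{w})\boldsymbol{w},\boldsymbol{w})$, and the first summand equals $-((\nabla\cdot\boldsymbol{w})\boldsymbol{w},\boldsymbol{w})$ by (\ref{PartInte-2-CONV}). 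Crucially this holds without $\boldsymbol{w}$ being pointwise solenoidal, so testing (\ref{OneLevel-EMAC-Nonlinear}) (with $d=H$) by $(\boldsymbol{v}_h,q_h)=(\boldsymbol{u}^{n+1}_H,p^{n+1}_H)$ annihilates both the pressure and the inertial contribution and, after the polarization identity and summation in $n$, produces the a priori stability bound $\max_n\|\boldsymbol{u}^n_H\|_0^2+\nu\Delta t\sum_n\|\nabla\boldsymbol{u}^n_H\|_0^2\lesssim\|\boldsymbol{u}^0_H\|_0^2+\nu^{-1}\Delta t\sum_n\|\boldsymbol{f}^{n+1}\|_0^2$. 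For well-posedness at each step I would restrict (\ref{OneLevel-EMAC-Nonlinear}) to the discretely divergence-free subspace of $\boldsymbol{V}_H$, note that the same identity makes the associated nonlinear map coercive on a sufficiently large ball, and conclude existence by a Brouwer fixed-point argument; uniqueness follows by testing the equation for the difference of two solutions against that difference and absorbing the inertial remainder (estimated by Lemma~\ref{Lemma-IneqBemac}) into $(\Delta t)^{-1}\|\cdot\|_0^2+\nu\|\nabla\cdot\|_0^2$ under a mild restriction on $\Delta t$.

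For convergence I would split $\boldsymbol{u}^n-\boldsymbol{u}^n_H=\boldsymbol{\eta}^n+\boldsymbol{\phi}^n$, with $\boldsymbol{\eta}^n:=\boldsymbol{u}^n-\Pi_H\boldsymbol{u}^n$ the projection error controlled by (\ref{ApproProper_FEspace}) (so $\|\boldsymbol{\eta}^n\|_0\lesssim H^2$, $\|\nabla\boldsymbol{\eta}^n\|_0\lesssim H$) and $\boldsymbol{\phi}^n:=\Pi_H\boldsymbol{u}^n-\boldsymbol{u}^n_H\in\boldsymbol{V}_H$ the finite-element error. Subtracting (\ref{OneLevel-EMAC-Nonlinear}) from (\ref{ContVariEqua}) at $t_{n+1}$, tested over $\boldsymbol{V}_H$, and inserting $\boldsymbol{\tau}^{n+1}:=\partial_t\boldsymbol{u}(t_{n+1})-(\boldsymbol{u}^{n+1}-\boldsymbol{u}^n)/\Delta t=O(\Delta t)$ yields an equation for $\boldsymbol{\phi}^{n+1}$. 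Testing with $\boldsymbol{v}_h=\boldsymbol{\phi}^{n+1}$, using polarization on the discrete time derivative, and eliminating the pressure error through (\ref{InfSupCond}) and (\ref{ApproProper_FEspace}), one is left to control the inertial difference, which I would expand by bilinearity as $b_{emac}(\boldsymbol{u}^{n+1},\boldsymbol{e}^{n+1},\boldsymbol{\phi}^{n+1})+b_{emac}(\boldsymbol{e}^{n+1},\boldsymbol{u}^{n+1}_H,\boldsymbol{\phi}^{n+1})$ with $\boldsymbol{e}^{n+1}=\boldsymbol{\eta}^{n+1}+\boldsymbol{\phi}^{n+1}$. The terms linear in $\boldsymbol{\eta}^{n+1}$ are estimated by re-expressing $b_{emac}$ through $b_{conv}$ via (\ref{Rela-2-EMAC-CONV}) and applying the sharp bounds (\ref{IneqBconv}) in the slot that exposes $\|\boldsymbol{\eta}^{n+1}\|_0$; the diagonal term $b_{emac}(\boldsymbol{u}^{n+1}_H,\boldsymbol{\phi}^{n+1},\boldsymbol{\phi}^{n+1})$ does \emph{not} vanish (EMAC lacks the convective skew-symmetry), and I would bound it by $\|\nabla\boldsymbol{u}^{n+1}_H\|_0\|\boldsymbol{\phi}^{n+1}\|_0\|\nabla\boldsymbol{\phi}^{n+1}\|_0$ through a Ladyzhenskaya inequality, absorbing the gradient into $\tfrac{\nu}{8}\|\nabla\boldsymbol{\phi}^{n+1}\|_0^2$ and feeding the residue $C\nu^{-1}\|\nabla\boldsymbol{u}^{n+1}_H\|_0^2\|\boldsymbol{\phi}^{n+1}\|_0^2$ into a discrete Gronwall, whose factor stays bounded thanks to the a priori estimate above. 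This delivers the energy-norm (optimal $H^1$) rate $O(H)+O(\Delta t)$ for $\boldsymbol{\phi}$.

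The genuinely hard part, and the reason the statement reaches the optimal $L^2$ order $H^2$ rather than the energy order $H$, is upgrading the $L^2$ estimate of $\boldsymbol{\phi}^n$: the direct energy argument is sub-optimal here, since with the $L^2$ projection the viscous cross term $\nu(\nabla\boldsymbol{\eta}^n,\nabla\boldsymbol{\phi}^n)$ is only $O(H)$ and the inertial cross terms expose $H^1$-norms of $\boldsymbol{\eta}^n$. For this I would run an Aubin--Nitsche duality argument exactly as in \cite[Theorem 3.1]{Heywood-Rannacher-1982-SINUM-1}: at each level introduce the solution of the dual (linearized Stokes/Oseen) problem with data $\boldsymbol{\phi}^n$, exploit $\boldsymbol{H}^2$-regularity on the convex polygon $\Omega$ to gain one extra power of $H$ when the dual solution is approximated by (\ref{ApproProper_FEspace}), and combine with the already-established energy bound to close. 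Once $\|\boldsymbol{\phi}^n\|_0\lesssim H^2+\Delta t$ is in hand, the combined norm follows at once: the projection part satisfies $\|\boldsymbol{\eta}^n\|_0+H\|\nabla\boldsymbol{\eta}^n\|_0\lesssim H^2$ by (\ref{ApproProper_FEspace}), while for the finite-element part the inverse inequality (\ref{InveIneq}) gives $H\|\nabla\boldsymbol{\phi}^n\|_0\lesssim\|\boldsymbol{\phi}^n\|_0\lesssim H^2+\Delta t$, so that $\|\boldsymbol{u}^n-\boldsymbol{u}^n_H\|_0+H\|\nabla(\boldsymbol{u}^n-\boldsymbol{u}^n_H)\|_0\lesssim H^2+\Delta t$ by the triangle inequality. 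I expect the duality step, together with the bookkeeping of the non-vanishing EMAC diagonal through Gronwall, to be the main obstacle; the temporal truncation only contributes the additive $O(\Delta t)$ and is routine.
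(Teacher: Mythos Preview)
Your proposal is correct and follows exactly the route the paper indicates: the authors do not supply a proof but explicitly defer to \cite[Theorem~3.1]{Heywood-Rannacher-1982-SINUM-1}, remarking that Lemma~\ref{Lemma-IneqBemac} furnishes the same trilinear bounds for $b_{emac}$ as for $b_{conv}$ and that the temporal truncation contributes only an additive $O(\Delta t)$. Your sketch---energy stability via $b_{emac}(\boldsymbol{w},\boldsymbol{w},\boldsymbol{w})=0$, Brouwer existence, the error split $\boldsymbol{\eta}^n+\boldsymbol{\phi}^n$, Gronwall absorption of the non-vanishing diagonal $b_{emac}(\boldsymbol{u}_H^{n+1},\boldsymbol{\phi}^{n+1},\boldsymbol{\phi}^{n+1})$, and Aubin--Nitsche duality for the $L^2$ upgrade---is precisely the Heywood--Rannacher machinery the paper invokes, so there is nothing to add.
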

    \vspace{-0.5cm}
    Here are the results of convergence about TwoLevel(Stokes) EMAC scheme (\ref{OneLevel-EMAC-Nonlinear}),(\ref{TwoLevel-EMAC-Stokes-Step2}) and TwoLevel (Newton) EMAC scheme (\ref{OneLevel-EMAC-Nonlinear}), (\ref{TwoLevel-EMAC-Newton-Step2}) without proof again, since those are highly similar, after substituting (\ref{IneqBconv}) with (\ref{IneqBemac}), with the two-level scheme with Stokes correction on fine mesh in Conv form in \cite[Theorem 6.6]{TwoLevel-UNSE-JCM-2004-2}, and two-level scheme with Newton correction on fine mesh in Conv form in \cite[Theorem 4.1]{TwoLevel-ThreeStep-UNSE-2017-JNM} (with $\delta = 1$). Detailed proof can be obtained in those mentioned references.
	\begin{Theorem}[Convergence for TwoLevel(Stokes) EMAC]\label{Theorem-Conv-Stokes}
		Under the conditions (\ref{InfSupCond})-(\ref{PF-ineq}), the solution $\boldsymbol{u}^{n}_h$ of TwoLevel(Stokes) EMAC scheme (\ref{OneLevel-EMAC-Nonlinear}),(\ref{TwoLevel-EMAC-Stokes-Step2}) and $\boldsymbol{u}^{n}$ in (\ref{ContVariEqua}) at $t=t_{n}$, for $\forall n$, satisfy
		\begin{equation}
			\setlength{\abovedisplayskip}{0.2cm}
			\|\nabla(\boldsymbol{u}^{n} - \boldsymbol{u}^{n}_h)\|_0 \lesssim h + H^2 + \Delta t.
			\setlength{\belowdisplayskip}{0.2cm}
		\end{equation}
	\end{Theorem}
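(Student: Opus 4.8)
The plan is to run the standard two-level energy argument, viewing the Stokes-correction step (\ref{TwoLevel-EMAC-Stokes-Step2}) as a \emph{linear} fine-mesh problem whose load inherits the frozen coarse-mesh nonlinearity. Write the coarse error $\boldsymbol{E}^{n}:=\boldsymbol{u}^{n}-\boldsymbol{u}^{n}_H$; Theorem \ref{Theorem-OneLevel-EMAC-Conv} gives $\|\boldsymbol{E}^{n}\|_0\lesssim H^2+\Delta t$ and, after dividing the combined bound by $H$, $\|\nabla\boldsymbol{E}^{n}\|_0\lesssim H+\Delta t/H$. I would split the fine error through a suitable projection $\boldsymbol{R}_h\boldsymbol{u}^{n}\in\boldsymbol{V}_h$ (e.g.\ the Stokes projection, so that $\boldsymbol{R}_h\boldsymbol{u}^{n}$ is discretely divergence-free), setting $\boldsymbol{u}^{n}-\boldsymbol{u}^{n}_h=(\boldsymbol{u}^{n}-\boldsymbol{R}_h\boldsymbol{u}^{n})+(\boldsymbol{R}_h\boldsymbol{u}^{n}-\boldsymbol{u}^{n}_h)=:\boldsymbol{\eta}^{n}+\boldsymbol{\theta}^{n}_h$. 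By (\ref{ApproProper_FEspace}) one has $\|\nabla\boldsymbol{\eta}^{n}\|_0\lesssim h$ (and $\|\boldsymbol{\eta}^{n}\|_0\lesssim h^2$), so it remains to prove $\|\nabla\boldsymbol{\theta}^{n}_h\|_0\lesssim h+H^2+\Delta t$.

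Next I would derive the equation for $\boldsymbol{\theta}^{n+1}_h$ by subtracting (\ref{TwoLevel-EMAC-Stokes-Step2}) from the continuous weak form (\ref{ContVariEqua}) at $t=t_{n+1}$. Its right-hand side collects three residuals: the backward-Euler truncation $\partial_t\boldsymbol{u}^{n+1}-(\boldsymbol{u}^{n+1}-\boldsymbol{u}^{n})/\Delta t$, bounded by $\Delta t\,\|\boldsymbol{u}_{tt}\|_{L^2(t_n,t_{n+1};\boldsymbol{Y})}$; the projection defect coming from $\boldsymbol{\eta}$; and the nonlinear consistency term. Testing with a discretely divergence-free $\boldsymbol{v}_h=\boldsymbol{\theta}^{n+1}_h$ removes the pressure (via the inclusion $\boldsymbol{V}_H\subset\boldsymbol{V}_h$ and the inf-sup condition (\ref{InfSupCond})) and makes the viscous term coercive. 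To land the pointwise-in-time gradient norm stated in the theorem rather than only an $\ell^2(H^1)$ average, I would additionally test with the discrete increment $(\boldsymbol{\theta}^{n+1}_h-\boldsymbol{\theta}^{n}_h)/\Delta t$ and use $\nu(\nabla\boldsymbol{\theta}^{n+1}_h,\nabla(\boldsymbol{\theta}^{n+1}_h-\boldsymbol{\theta}^{n}_h))\geq\tfrac{\nu}{2}(\|\nabla\boldsymbol{\theta}^{n+1}_h\|_0^2-\|\nabla\boldsymbol{\theta}^{n}_h\|_0^2)$, which telescopes after summation.

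The crux is the nonlinear consistency term $b_{emac}(\boldsymbol{u}^{n+1},\boldsymbol{u}^{n+1},\boldsymbol{v}_h)-b_{emac}(\boldsymbol{u}^{n+1}_H,\boldsymbol{u}^{n+1}_H,\boldsymbol{v}_h)$. Exploiting trilinearity I would write it as $b_{emac}(\boldsymbol{E}^{n+1},\boldsymbol{u}^{n+1},\boldsymbol{v}_h)+b_{emac}(\boldsymbol{u}^{n+1}_H,\boldsymbol{E}^{n+1},\boldsymbol{v}_h)$. The first piece is harmless: the second estimate in Lemma \ref{Lemma-IneqBemac} gives $b_{emac}(\boldsymbol{E}^{n+1},\boldsymbol{u}^{n+1},\boldsymbol{v}_h)\lesssim\|\boldsymbol{E}^{n+1}\|_0\|\boldsymbol{u}^{n+1}\|_2\|\nabla\boldsymbol{v}_h\|_0\lesssim(H^2+\Delta t)\|\nabla\boldsymbol{v}_h\|_0$. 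The difficulty — and what I expect to be the main obstacle — is the second piece, where the coarse error occupies the middle (differentiated) slot: Lemma \ref{Lemma-IneqBemac} applied directly yields only $\|\boldsymbol{u}^{n+1}_H\|_1\|\nabla\boldsymbol{E}^{n+1}\|_0\|\nabla\boldsymbol{v}_h\|_0=O(H)$, which would spoil the claimed $H^2$. To recover the optimal order I would pass to the convective form via (\ref{Rela-EMAC-CONV}), integrate by parts using (\ref{PartInte-CONV}) to shift the derivative off $\boldsymbol{E}^{n+1}$ and onto the test function, and invoke a uniform $L^\infty$ bound on the coarse velocity $\boldsymbol{u}^{n+1}_H$ (from the stability estimate together with the two-dimensional embedding $\boldsymbol{H}^2\hookrightarrow\boldsymbol{L}^\infty$), thereby measuring $\boldsymbol{E}^{n+1}$ in $L^2$ and producing $O(H^2+\Delta t)$; the leftover divergence contributions $((\nabla\cdot\boldsymbol{u}^{n+1}_H)\boldsymbol{E}^{n+1},\boldsymbol{v}_h)$ are of even higher order after a Ladyzhenskaya/H\"older bound.

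Finally I would absorb the coercive $\nu\|\nabla\boldsymbol{\theta}^{n+1}_h\|_0^2$ against the residual terms by Young's inequality and apply the discrete Gronwall lemma over $0\leq n\leq N-1$; the accumulated data being $O(h+H^2+\Delta t)$, this gives $\|\nabla\boldsymbol{\theta}^{n}_h\|_0\lesssim h+H^2+\Delta t$, and the triangle inequality with $\|\nabla\boldsymbol{\eta}^{n}\|_0\lesssim h$ closes the estimate. Apart from the middle-slot term, every step parallels the convective-form two-level analysis of \cite{TwoLevel-UNSE-JCM-2004-2} with (\ref{IneqBconv}) replaced by Lemma \ref{Lemma-IneqBemac}, as the authors indicate. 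One technical caveat to monitor is the factor $\Delta t/H$ in $\|\nabla\boldsymbol{E}^{n+1}\|_0$: I would ensure it enters only inside already higher-order products, or impose a mild restriction such as $\Delta t\lesssim H$, so that it does not degrade the stated rate.
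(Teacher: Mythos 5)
Your proposal is correct and follows essentially the route the paper itself relies on: the paper omits this proof entirely, deferring to the convective-form two-level analysis of \cite[Theorem 6.6]{TwoLevel-UNSE-JCM-2004-2} with (\ref{IneqBconv}) replaced by Lemma \ref{Lemma-IneqBemac}, which is precisely the projection-splitting, error-equation, Gronwall argument you sketch. You also correctly isolate the genuine crux --- bounding $b_{emac}(\boldsymbol{u}^{n+1}_H,\boldsymbol{E}^{n+1},\boldsymbol{v}_h)$ at order $H^2+\Delta t$, which forces $\boldsymbol{E}^{n+1}$ into $L^2$ at the price of $H^2$-type (hence $L^\infty$ and $W^{1,4}$) stability of $\boldsymbol{u}^{n+1}_H$, obtainable as in Remark \ref{Remark-HighOrder-Stokes} under the mild restriction $\Delta t\lesssim H^2$ that you already flag.
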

    \begin{Theorem}[Convergence for TwoLevel(Newton) EMAC]\label{Theorem-Conv-Newton}
    	Under the conditions (\ref{InfSupCond})-(\ref{PF-ineq}), the solution $\boldsymbol{u}^{n}_h$ of TwoLevel(Newton) EMAC scheme (\ref{OneLevel-EMAC-Nonlinear}),(\ref{TwoLevel-EMAC-Newton-Step2}) and $\boldsymbol{u}^{n}$ in (\ref{ContVariEqua}) at $t=t_{n}$, for $\forall n$, satisfy
    	\begin{equation}
    		\setlength{\abovedisplayskip}{0.2cm}
    		\|\nabla(\boldsymbol{u}^{n} - \boldsymbol{u}^{n}_h)\|_0 \lesssim h + H^2 + \Delta t.
    		\setlength{\belowdisplayskip}{0.2cm}
    	\end{equation}
    \end{Theorem}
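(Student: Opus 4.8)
The plan is to mirror the one-level and Stokes-correction analyses, exploiting the fact that Lemma~\ref{Lemma-IneqBemac} endows $b_{emac}$ with exactly the same multiplicative bounds as $b_{conv}$ in (\ref{IneqBconv}); hence the two-level Newton machinery of \cite[Theorem 4.1]{TwoLevel-ThreeStep-UNSE-2017-JNM} (with $\delta=1$) transfers once (\ref{IneqBconv}) is replaced by (\ref{IneqBemac}). First I would write the continuous equation (\ref{ContVariEqua}) at $t=t_{n+1}$, test it against $\boldsymbol{v}_h\in\boldsymbol{V}_h$, and subtract the fine-mesh scheme (\ref{TwoLevel-EMAC-Newton-Step2}). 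Splitting the error through the $L^2$-projection, $\boldsymbol{u}^{n+1}-\boldsymbol{u}^{n+1}_h=(\boldsymbol{u}^{n+1}-\Pi_h\boldsymbol{u}^{n+1})+(\Pi_h\boldsymbol{u}^{n+1}-\boldsymbol{u}^{n+1}_h)=:\boldsymbol{\eta}^{n+1}+\boldsymbol{\phi}_h^{n+1}$, yields an error equation for the discrete part $\boldsymbol{\phi}_h^{n+1}\in\boldsymbol{V}_h$ whose right-hand side collects (i) the temporal residual $\boldsymbol{r}^{n+1}:=\partial_t\boldsymbol{u}(t_{n+1})-(\boldsymbol{u}^{n+1}-\boldsymbol{u}^{n})/\Delta t$ of size $O(\Delta t)$, (ii) the projection error $\boldsymbol{\eta}$, controlled by (\ref{ApproProper_FEspace}) at order $h$, and (iii) the nonlinear consistency error produced by the Newton linearization. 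Well-posedness of the linear system (\ref{TwoLevel-EMAC-Newton-Step2}) follows from the inf-sup condition (\ref{InfSupCond}) and coercivity of the viscous part once the linearized convection coefficients, bounded through the Step~I stability of $\boldsymbol{u}^{n+1}_H$, are controlled for $\Delta t$ sufficiently small.

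The decisive algebraic step is to identify term (iii). Writing $\boldsymbol{e}^{n+1}:=\boldsymbol{u}^{n+1}-\boldsymbol{u}^{n+1}_H$ for the coarse error and $\boldsymbol{e}_h^{n+1}:=\boldsymbol{u}^{n+1}-\boldsymbol{u}^{n+1}_h$ for the fine error, and using the bilinearity of $b_{emac}$ in its first two arguments, I would show that the difference between the exact inertial term $b_{emac}(\boldsymbol{u}^{n+1},\boldsymbol{u}^{n+1},\boldsymbol{v}_h)$ and the linearized triple appearing in (\ref{TwoLevel-EMAC-Newton-Step2}) reduces exactly to
\[
b_{emac}(\boldsymbol{e}^{n+1},\boldsymbol{e}^{n+1},\boldsymbol{v}_h)+b_{emac}(\boldsymbol{u}^{n+1}_H,\boldsymbol{e}_h^{n+1},\boldsymbol{v}_h)+b_{emac}(\boldsymbol{e}_h^{n+1},\boldsymbol{u}^{n+1}_H,\boldsymbol{v}_h),
\]
the first of which is the genuine \emph{quadratic} Newton residual, while the remaining two are linear in the fine error. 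Inserting $\boldsymbol{e}_h^{n+1}=\boldsymbol{\eta}^{n+1}+\boldsymbol{\phi}_h^{n+1}$ and testing with $\boldsymbol{v}_h=\boldsymbol{\phi}_h^{n+1}$, the $\boldsymbol{\phi}_h$-parts of the last two become the linearized convection contributions $b_{emac}(\boldsymbol{u}^{n+1}_H,\boldsymbol{\phi}_h^{n+1},\boldsymbol{\phi}_h^{n+1})+b_{emac}(\boldsymbol{\phi}_h^{n+1},\boldsymbol{u}^{n+1}_H,\boldsymbol{\phi}_h^{n+1})$, bounded by $\|\boldsymbol{u}^{n+1}_H\|_1\|\boldsymbol{\phi}_h^{n+1}\|_1^2$ via (\ref{IneqBemac}) and absorbed by a Gronwall argument, while the $\boldsymbol{\eta}$-parts are consistency terms of order $h\|\nabla\boldsymbol{\phi}_h^{n+1}\|_0$.

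I would then carry out the standard energy estimate: test the error equation with $\boldsymbol{\phi}_h^{n+1}$, use $(\boldsymbol{\phi}_h^{n+1}-\boldsymbol{\phi}_h^{n},\boldsymbol{\phi}_h^{n+1})\geq\tfrac12(\|\boldsymbol{\phi}_h^{n+1}\|_0^2-\|\boldsymbol{\phi}_h^{n}\|_0^2)$, viscous coercivity $\nu\|\nabla\boldsymbol{\phi}_h^{n+1}\|_0^2$, the bounds (\ref{IneqBemac}) for the convection terms, and Young's inequality; multiplying by $\Delta t$, summing over $n$, and invoking the discrete Gronwall lemma (whose coefficients $\|\boldsymbol{u}^{n+1}_H\|_1$ are uniformly bounded by Step~I stability) gives $\|\boldsymbol{\phi}_h^{N}\|_0^2+\nu\Delta t\sum_n\|\nabla\boldsymbol{\phi}_h^{n+1}\|_0^2\lesssim(h+H^2+\Delta t)^2$. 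A triangle inequality together with $\|\nabla\boldsymbol{\eta}^{n}\|_0\lesssim h$ then yields the claimed $\|\nabla(\boldsymbol{u}^{n}-\boldsymbol{u}^{n}_h)\|_0\lesssim h+H^2+\Delta t$.

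The hard part, and the only place where the Newton analysis genuinely departs from the Stokes one, is the quadratic residual $b_{emac}(\boldsymbol{e}^{n+1},\boldsymbol{e}^{n+1},\boldsymbol{\phi}_h^{n+1})$. Theorem~\ref{Theorem-OneLevel-EMAC-Conv} supplies only $\|\nabla\boldsymbol{e}^{n+1}\|_0\lesssim H+\Delta t/H$, so the naive bound $\|\boldsymbol{e}^{n+1}\|_1^2\|\nabla\boldsymbol{\phi}_h^{n+1}\|_0$ obtained from the first line of (\ref{IneqBemac}) is only $O(H^2+\Delta t^2/H^2)$, meeting the target $O(H^2+\Delta t)$ under the mild restriction $\Delta t\lesssim H^2$. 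To dispense with it one instead uses the second estimate in (\ref{IneqBemac}), namely $b_{emac}(\boldsymbol{e}^{n+1},\boldsymbol{e}^{n+1},\boldsymbol{\phi}_h^{n+1})\lesssim\|\boldsymbol{e}^{n+1}\|_0\|\boldsymbol{e}^{n+1}\|_2\|\nabla\boldsymbol{\phi}_h^{n+1}\|_0$, combined with the sharp $L^2$ coarse bound $\|\boldsymbol{e}^{n+1}\|_0\lesssim H^2+\Delta t$ and an $H^2$-type control of $\boldsymbol{e}^{n+1}$ (through the regularity of $\boldsymbol{u}^{n+1}$ and an inverse estimate (\ref{InveIneq}) on the discrete remainder), so that the residual enters at exactly the $H^2+\Delta t$ level. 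Closing this quadratic term, and with it the discrete-in-time Gronwall accumulation, is the technical crux of the argument.
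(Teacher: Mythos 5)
The paper gives no actual proof of this theorem---it only points to \cite[Theorem 4.1]{TwoLevel-ThreeStep-UNSE-2017-JNM} (with $\delta=1$) and instructs the reader to substitute (\ref{IneqBconv}) by (\ref{IneqBemac})---and your sketch fills in precisely that intended argument: the Newton-residual identity you derive (a quadratic coarse-error term $b_{emac}(\boldsymbol{e}^{n+1},\boldsymbol{e}^{n+1},\cdot)$ plus two terms linear in the fine error) is algebraically correct, and the projection-splitting/energy/Gronwall closure is the standard one. The only caveat is your claim that the $\|\boldsymbol{e}^{n+1}\|_0\|\boldsymbol{e}^{n+1}\|_2$ bound lets you dispense with $\Delta t\lesssim H^2$: obtaining $H^2$-control of the coarse error through the inverse inequality (\ref{InveIneq}) requires $H^{-2}(H^2+\Delta t)\lesssim 1$, i.e.\ the same restriction, so a scaling condition between $\Delta t$ and $H$ remains implicit in the theorem on either route.
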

    \vspace{-0.2cm}
	\begin{Remark}
		We remark on the error estimate of pressure. We omit the error estimate about pressure since the primary purpose of this manuscript is to study, analyze, and verify some conservation properties related to velocity. We can also, for completeness, analyze pressure's convergence. To this end, we can define two types of pressure error estimates: the first one is fixing continuous, primal pressure $p(\boldsymbol{x},t)$ and recovering its discrete counterpart $p^n_{h,primal}$ a posterior as $p^n_{h,primal} := p^n_{h} + \frac12|\boldsymbol{u}_h^n|^2$ after solving (\ref{TwoLevel-EMAC-Stokes-Step2}) or (\ref{TwoLevel-EMAC-Newton-Step2}); the second type is redefining a prior continuous EMAC pressure $p_{emac}(\boldsymbol{x},t) := p(\boldsymbol{x},t) - \frac12|\boldsymbol{u}(\boldsymbol{x},t)|^2 + \lambda$ and $\lambda := \int_{\Omega}\frac12|\boldsymbol{u}|^2\text{d}x$, thus obtaining error estimate $\|p_{emac} - p_h^n\|_{L^2}$. We present the errors and optimal convergence rates in the forthcoming numerical experiment and omit their theoretical analysis which will be presented in subsequent work. 
	\end{Remark}
    \vspace{-0.8cm}
	\subsection{Verification of conservation properties}
	\vspace{-0.5cm}
	In this subsection, we will give and analyze the conservation properties of EMAC-TwoLevel(Stokes) scheme (\ref{OneLevel-EMAC-Nonlinear}),(\ref{TwoLevel-EMAC-Stokes-Step2}) and EMAC-TwoLevel(Newton) scheme (\ref{OneLevel-EMAC-Nonlinear}),(\ref{TwoLevel-EMAC-Newton-Step2}) on energy, momentum, and angular momentum. Before that, we briefly point out the key to balancing these three physical quantities related to the two two-level numerical schemes proposed herein. To this end, we need to  set the boundary conditions carefully. As pointed out in \cite{Linearized-EMAC-ANM-2019}, most physical boundary conditions and their numerical discretizations will alter the physical balances of quantities of interest; therefore, in order to solely investigate the impact of two-level linearized methods on the linear corrections separately separated from the contribution of the boundary conditions, the same approach in \cite{Linearized-EMAC-ANM-2019} is adopted here which states as assuming the finite element solution $(\boldsymbol{u}^{n+1}_d, p^{n+1}_d)$ and $\boldsymbol{f}^{n+1}$, $\forall n \geq 0$, is supported in a subset $\widehat{\Omega} \subsetneq \Omega$, i.e., there is a strip $S = \Omega \backslash \widehat{\Omega}$ along $\partial\Omega$ where each $\boldsymbol{u}^{n+1}_d$ is zeros.  \\
	\indent To analyze the two schemes' conservation laws, We write uniformly the two step-2s (\ref{TwoLevel-EMAC-Stokes-Step2}), (\ref{TwoLevel-EMAC-Newton-Step2}) as
	$$
	\setlength{\abovedisplayskip}{0.2cm}
	\begin{aligned}
	(\boldsymbol{u}^{n+1}_h - \boldsymbol{u}^{n}_h)/\Delta t &+ \nu (\nabla\boldsymbol{u}^{n+1}_h,\nabla\boldsymbol{v}_h) + NL_{emac}(\boldsymbol{u}^{n+1}_H,\boldsymbol{u}^{n+1}_h,\boldsymbol{v}_h) \\
	&- (\nabla\cdot \boldsymbol{v}_h,p^{n+1}_h) + (\nabla\cdot \boldsymbol{u}^{n+1}_h,q_h) = (\boldsymbol{f}^{n+1},\boldsymbol{v}_h),
    \end{aligned}
    \setlength{\belowdisplayskip}{0.2cm}
	$$
	We test $\boldsymbol{v}_h = \boldsymbol{u}^{n+1}_h, \widehat{\boldsymbol{e}_i}$ and $\widehat{\boldsymbol{\phi}_i} := \widehat{\boldsymbol{x} \times \boldsymbol{e}_i}$ respectively, and set $q_h=0$, $\boldsymbol{f}^{n+1}=\boldsymbol{0}$, $\nu=0$ when analyzing energy, where the tilde operator alters these quantities only in the strip $S$ along the boundary to ensure $\widehat{\boldsymbol{e}_i}, \widehat{\boldsymbol{\phi}_i} \in \boldsymbol{X}_h$. This gives
	$$
	\setlength{\abovedisplayskip}{0.2cm}
	\begin{aligned}
		\|\boldsymbol{u}^{n+1}_h\|^2_0 - \|\boldsymbol{u}^n_h\|^2_0 &= -\Delta tNL_{emac}(\boldsymbol{u}^{n+1}_H,\boldsymbol{u}^{n+1}_h,\boldsymbol{u}^{n+1}_h), \\
		\left(\boldsymbol{u}^{n+1}_h, \boldsymbol{e}_i\right) - \left(\boldsymbol{u}^n_h, \boldsymbol{e}_i\right) &= -\Delta tNL_{emac}(\boldsymbol{u}^{n+1}_H,\boldsymbol{u}^{n+1}_h,\boldsymbol{e}_i), \\
		\left(\boldsymbol{u}^{n+1}_h, \boldsymbol{\phi}_i\right) - \left(\boldsymbol{u}^n_h, \boldsymbol{\phi}_i\right) &= -\Delta tNL_{emac}(\boldsymbol{u}^{n+1}_H,\boldsymbol{u}^{n+1}_h,\boldsymbol{\phi}_i),
	\end{aligned}
	\setlength{\belowdisplayskip}{0.2cm}
	$$
	From this, we can conclude that the key to conserving the three quantities depends on whether or not the three trilinear terms above vanish in two two-level schemes. Specifically, we have the following two theorems concerning the two numerical schemes.
	\begin{Theorem}[Conservation properties for EMAC-TwoLevel(Stokes)]\label{EMAC-TwoLevel(Stokes)}
		The EMAC-TwoLevel(Stokes) scheme (\ref{OneLevel-EMAC-Nonlinear}),(\ref{TwoLevel-EMAC-Stokes-Step2}) conserves momentum and angular momentum in the absence of extra force $\boldsymbol{f}$; specifically, for $n=1,2,\cdots, N$, the solutions $\boldsymbol{u}^n_h$ of (\ref{OneLevel-EMAC-Nonlinear}),(\ref{TwoLevel-EMAC-Stokes-Step2}) satisfy
		$$
		\setlength{\abovedisplayskip}{0.2cm}
		\begin{aligned}
			\left(\boldsymbol{u}^n_h, \boldsymbol{e}_i\right) &= \left(\boldsymbol{u}^0_h, \boldsymbol{e}_i\right),  \\
			\left(\boldsymbol{u}^n_h, \boldsymbol{x} \times \boldsymbol{e}_i\right) &= \left(\boldsymbol{u}^0_h, \boldsymbol{x} \times \boldsymbol{e}_i\right),
		\end{aligned}
		\setlength{\belowdisplayskip}{0.2cm}
		$$
		and asymptotically conserves energy up to a spatial term $\mathcal{O}(H)$, after assuming $\Delta t \lesssim H^2$, in the sense of 
		$$
		\setlength{\abovedisplayskip}{0.2cm}
		\|\boldsymbol{u}^n_h\|^2_0 = \|\boldsymbol{u}^0_h\|^2_0 + \mathcal{O}(H), 
		\setlength{\belowdisplayskip}{0.2cm}
		$$
		where $\boldsymbol{e}_i(i=1,2,3)$ represents the $i$-th coordinate direction, $\boldsymbol{x}$ is spatial coordinate and $H$ denotes the length-scale of coarse mesh.  
	\end{Theorem}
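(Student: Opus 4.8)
The plan is to exploit the unified step-2 form recorded above, specialized to the Stokes correction, in which $NL_{emac}(\boldsymbol{u}^{n+1}_H,\boldsymbol{u}^{n+1}_h,\boldsymbol{v}_h)$ collapses to the purely coarse, diagonal term $b_{emac}(\boldsymbol{u}^{n+1}_H,\boldsymbol{u}^{n+1}_H,\boldsymbol{v}_h)$ appearing in (\ref{TwoLevel-EMAC-Stokes-Step2}). As already reduced in the excerpt, each of the three claims amounts to controlling a single trilinear quantity obtained by testing with $\boldsymbol{v}_h=\widehat{\boldsymbol{e}_i}$, $\widehat{\boldsymbol{\phi}_i}$, or $\boldsymbol{u}^{n+1}_h$, once the viscous and pressure contributions are removed. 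I would first record, under the compact-support hypothesis on $(\boldsymbol{u}^{n+1}_d,p^{n+1}_d)$, that those auxiliary contributions vanish: for $\boldsymbol{v}_h=\widehat{\boldsymbol{e}_i}$ both $\nabla\widehat{\boldsymbol{e}_i}$ and $\nabla\cdot\widehat{\boldsymbol{e}_i}$ are zero on $\widehat{\Omega}$ while $\boldsymbol{u}^{n+1}_h$ and $p^{n+1}_h$ vanish on the strip $S$; for $\boldsymbol{v}_h=\widehat{\boldsymbol{\phi}_i}$ the rigid-rotation field $\boldsymbol{\phi}_i=\boldsymbol{x}\times\boldsymbol{e}_i$ is divergence-free and harmonic, so integrating the viscous term by parts leaves only boundary contributions that vanish by compact support.

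For momentum and angular momentum I would invoke the defining EMAC identities for the diagonal form, namely $b_{emac}(\boldsymbol{a},\boldsymbol{a},\boldsymbol{e}_i)=0$ and $b_{emac}(\boldsymbol{a},\boldsymbol{a},\boldsymbol{x}\times\boldsymbol{e}_i)=0$ whenever $\boldsymbol{a}$ is supported in $\widehat{\Omega}$ (these are precisely the conservation properties for which the EMAC splitting was constructed). Taking $\boldsymbol{a}=\boldsymbol{u}^{n+1}_H$ forces the right-hand sides of the momentum and angular-momentum balances to vanish identically, so that $(\boldsymbol{u}^{n+1}_h,\boldsymbol{e}_i)=(\boldsymbol{u}^n_h,\boldsymbol{e}_i)$ and $(\boldsymbol{u}^{n+1}_h,\boldsymbol{x}\times\boldsymbol{e}_i)=(\boldsymbol{u}^n_h,\boldsymbol{x}\times\boldsymbol{e}_i)$; a telescoping sum over the time index then yields the two stated equalities exactly.

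The energy statement is the only genuinely nontrivial one, since now the test function $\boldsymbol{u}^{n+1}_h$ no longer matches the frozen arguments $\boldsymbol{u}^{n+1}_H$. Working with the one-step identity above (with $\nu=0$, $\boldsymbol{f}=\boldsymbol{0}$), I would use the diagonal energy identity $b_{emac}(\boldsymbol{u}^{n+1}_H,\boldsymbol{u}^{n+1}_H,\boldsymbol{u}^{n+1}_H)=0$ to rewrite the residual as $b_{emac}(\boldsymbol{u}^{n+1}_H,\boldsymbol{u}^{n+1}_H,\boldsymbol{u}^{n+1}_h-\boldsymbol{u}^{n+1}_H)$, thereby converting the energy drift into a two-grid consistency error. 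Estimating this by the first bound of Lemma \ref{Lemma-IneqBemac} gives $\|\boldsymbol{u}^{n+1}_H\|_1^2\,\|\boldsymbol{u}^{n+1}_h-\boldsymbol{u}^{n+1}_H\|_1$; the coarse factor is uniformly bounded through the stability and convergence of the one-level solve, while the discrepancy is controlled by the triangle inequality together with Theorems \ref{Theorem-OneLevel-EMAC-Conv} and \ref{Theorem-Conv-Stokes}, giving $\|\boldsymbol{u}^{n+1}_h-\boldsymbol{u}^{n+1}_H\|_1 \lesssim h+H+\Delta t/H$. Imposing $\Delta t\lesssim H^2$ (and recalling $h<H$) absorbs the $\Delta t/H$ term and renders each residual $\mathcal{O}(H)$. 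Summing the one-step identity over $n\le N=\lfloor T/\Delta t\rfloor$ levels, each contributing $\Delta t\cdot\mathcal{O}(H)$, telescopes to a total drift of order $N\Delta t\cdot H=T\cdot\mathcal{O}(H)=\mathcal{O}(H)$; the nonnegative backward-Euler dissipation $\tfrac12\sum_k\|\boldsymbol{u}^{k+1}_h-\boldsymbol{u}^k_h\|_0^2$ is of order $\mathcal{O}(\Delta t)=\mathcal{O}(H)$ and therefore does not spoil the bound.

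The main obstacle is exactly this energy estimate. Momentum and angular momentum are essentially algebraic once the EMAC diagonal identities and the compact-support cancellations are established, but the energy argument requires both (i) identifying the drift as a two-grid consistency error via the $b_{emac}(\boldsymbol{a},\boldsymbol{a},\cdot)$ splitting, and (ii) ensuring that accumulation over $T/\Delta t$ time levels does not amplify the per-step $\mathcal{O}(H)$ into a larger power. Step (ii) is where the coupling $\Delta t\lesssim H^2$ becomes indispensable: it is precisely what neutralizes the $\Delta t/H$ contamination coming from the $H^{-1}$-weighted gradient error of the coarse solve in Theorem \ref{Theorem-OneLevel-EMAC-Conv}, which would otherwise obstruct the $\mathcal{O}(H)$ bound on the two-grid discrepancy.
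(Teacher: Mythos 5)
Your proposal follows essentially the same route as the paper's proof: reduce each claim to the trilinear term $b_{emac}(\boldsymbol{u}^{n+1}_H,\boldsymbol{u}^{n+1}_H,\boldsymbol{v}_h)$, kill it exactly for $\boldsymbol{v}_h=\boldsymbol{e}_i,\boldsymbol{\phi}_i$ via the EMAC identities, and for energy bound it by $\|\boldsymbol{u}^{n+1}_h-\boldsymbol{u}^{n+1}_H\|_1\lesssim H$ obtained from Theorems \ref{Theorem-OneLevel-EMAC-Conv} and \ref{Theorem-Conv-Stokes} under $\Delta t\lesssim H^2$. Your version is in fact slightly cleaner in two spots where the paper is terse: you subtract in the third slot using $b_{emac}(\boldsymbol{a},\boldsymbol{a},\boldsymbol{a})=0$ (the paper subtracts in the second slot, which implicitly needs $b_{emac}(\boldsymbol{u},\boldsymbol{v},\boldsymbol{v})=0$ and is best read as a typo for your identity), and you explicitly account for the accumulation of the per-step $\Delta t\cdot\mathcal{O}(H)$ drift and the backward-Euler dissipation over the $N=T/\Delta t$ steps.
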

	\begin{proof}
		\vspace{-0.5cm}
		As indicated above, we only need to analyze the trilinear term with three different test functions, and the trilinear term, at this point, is specific as
		$$
		\setlength{\abovedisplayskip}{0.2cm}
		NL_{emac}(\boldsymbol{u}^{n+1}_H,\boldsymbol{u}^{n+1}_h,\boldsymbol{v}_h) = b_{emac}(\boldsymbol{u}^{n+1}_H,\boldsymbol{u}^{n+1}_H,\boldsymbol{v}_h).
		\setlength{\belowdisplayskip}{0.2cm}
		$$
		Firstly, in order to prove momentum conservation, we use (\ref{PartInte-2-CONV}), (\ref{Rela-EMAC-CONV}), and the fact that $\nabla\cdot \boldsymbol{e}_i = 0$ and $\nabla \boldsymbol{e}_i = \boldsymbol{0}$, to obtain the vanishing trilinear term by simple calculation after setting $\boldsymbol{v}_h = \boldsymbol{e}_i$, which means the momentum is conservative. Secondly, to see angular momentum conservation, using (\ref{PartInte-2-CONV}), (\ref{Rela-EMAC-CONV}), the fact that $\nabla\cdot \boldsymbol{\phi}_i = \nabla\cdot(\boldsymbol{x} \times \boldsymbol{e}_i) = 0$, and direct calculation, we get $b_{emac}(\boldsymbol{u}^{n+1}_H,\boldsymbol{u}^{n+1}_H,\boldsymbol{\phi}_i) = 0$. This implies the conservation of angular momentum. Lastly, to see energy conservation asymptotically, we firstly use Theorem \ref{Theorem-OneLevel-EMAC-Conv} and Theorem \ref{Theorem-Conv-Newton} to estimate
        $$
        \setlength{\abovedisplayskip}{0.2cm}
        \begin{aligned}
        	\|\boldsymbol{u}^{n+1}_h-\boldsymbol{u}^{n+1}_H\|_1 
        	\lesssim \|\boldsymbol{u}^{n+1}-\boldsymbol{u}^{n+1}_h\|_1 + \|\boldsymbol{u}^{n+1}-\boldsymbol{u}^{n+1}_H\|_1 \lesssim (h + H^2 + \Delta t) + H^{-1}(H^2 + \Delta t).
        \end{aligned}
        \setlength{\belowdisplayskip}{0.2cm}
        $$
        Thus, if we assume $\Delta t \lesssim H^2$, we derive
        \begin{equation}\label{ConvRate-uh-uH}
        	\setlength{\abovedisplayskip}{0.2cm}
        	\|\boldsymbol{u}^{n+1}_h-\boldsymbol{u}^{n+1}_H\|_1 \lesssim H.
        	\setlength{\belowdisplayskip}{0.2cm}
        \end{equation}
        Therefore, we derive
         $$
         \setlength{\abovedisplayskip}{0.2cm}
         \begin{aligned}
         	b_{emac}(\boldsymbol{u}^{n+1}_H,\boldsymbol{u}^{n+1}_H,\boldsymbol{u}^{n+1}_h) = b_{emac}(\boldsymbol{u}^{n+1}_H,\boldsymbol{u}^{n+1}_H-\boldsymbol{u}^{n+1}_h,\boldsymbol{u}^{n+1}_h) \lesssim \|\boldsymbol{u}^{n+1}_H-\boldsymbol{u}^{n+1}_h\|_1\|\boldsymbol{u}^{n+1}_H\|_1\|\boldsymbol{u}^{n+1}_h\|_1 \lesssim H,
         \end{aligned}
         \setlength{\belowdisplayskip}{0.2cm}
         $$
         which means the trilinear term, when estimating energy, is $\mathcal{O}(H)$-small if we assume $\Delta t \lesssim H^2$:
         $$
         \setlength{\abovedisplayskip}{0.2cm}
         b_{emac}(\boldsymbol{u}^{n+1}_H,\boldsymbol{u}^{n+1}_H,\boldsymbol{u}^{n+1}_h) = \mathcal{O}(H).
         \setlength{\belowdisplayskip}{0.2cm}
         $$
         Thus, this proof is completed.    
    \end{proof}
    \vspace{-0.2cm}
    \begin{Remark}\label{Remark-HighOrder-Stokes}
    	A higher-order term about spatial length-scale $H$ in above the asymptotic conservation of energy may be derived through the sharper estimate of the trilinear term. For example, we firstly obtain stability of $\|\boldsymbol{u}^{n+1}_H\|_2$ directly by $\|\boldsymbol{u}^{n+1}_H\|_2 \leq \|\boldsymbol{u}^{n+1}\|_2 + \|\boldsymbol{u}^{n+1}-\boldsymbol{u}^{n+1}_H\|_2 \lesssim C+H^{-2}(H^2+\Delta t) \leq C$ if $\Delta t \lesssim H^2$, with which we can get $\|\boldsymbol{u}^{n+1}_h-\boldsymbol{u}^{n+1}_H\|_0 \lesssim \|\boldsymbol{u}^{n+1} - \boldsymbol{u}^{n+1}_h\|_0 + \|\boldsymbol{u}^{n+1} - \boldsymbol{u}^{n+1}_H\|_0 \lesssim \|\boldsymbol{u}^{n+1} - \boldsymbol{u}^{n+1}_h\|_1 + H^2+\Delta t \lesssim h + H^2+\Delta t$. Therefore, an alternative estimate about trilinear can be obtained as
    	$$
    	\setlength{\abovedisplayskip}{0.2cm}
    		b_{emac}(\boldsymbol{u}^{n+1}_H,\boldsymbol{u}^{n+1}_H,\boldsymbol{u}^{n+1}_h) 
    		\lesssim \|\boldsymbol{u}^{n+1}_H-\boldsymbol{u}^{n+1}_h\|_0\|\boldsymbol{u}^{n+1}_H\|_2\|\boldsymbol{u}^{n+1}_h\|_1 
    		\lesssim h + H^2+\Delta t.
        \setlength{\belowdisplayskip}{0.2cm}
    	$$
    	Therefore, this estimate could be utilized to obtain spatial higher order asymptotic conservation of energy.
    	\vspace{-0.5cm}
    \end{Remark}
	We then give another result about EMAC-TwoLevel(Newton) scheme (\ref{OneLevel-EMAC-Nonlinear}),(\ref{TwoLevel-EMAC-Newton-Step2}).
	\begin{Theorem}[Conservation properties for EMAC-TwoLevel(Newton)]
		The EMAC-TwoLevel(Newton) scheme (\ref{OneLevel-EMAC-Nonlinear}),(\ref{TwoLevel-EMAC-Newton-Step2}) conserves momentum and angular momentum in the absence of extra force $\boldsymbol{f}$ in the sense of, for $n=1,2,\cdots, N$, the solutions $\boldsymbol{u}^n_h$ of (\ref{OneLevel-EMAC-Nonlinear}),(\ref{TwoLevel-EMAC-Newton-Step2}) satisfies
		$$
		\setlength{\abovedisplayskip}{0.2cm}
		\begin{aligned}
			\left(\boldsymbol{u}^n_h, \boldsymbol{e}_i\right) &= \left(\boldsymbol{u}^0_h, \boldsymbol{e}_i\right),  \\
			\left(\boldsymbol{u}^n_h, \boldsymbol{x} \times \boldsymbol{e}_i\right) &= \left(\boldsymbol{u}^0_h, \boldsymbol{x} \times \boldsymbol{e}_i\right),
		\end{aligned}
	    \setlength{\belowdisplayskip}{0.2cm}
		$$
		and asymptotically conserves energy up to a spatial high-order term $\mathcal{O}(H^2)$ if assuming $\Delta t \lesssim H^2$:
		$$
		\setlength{\abovedisplayskip}{0.2cm}
		\|\boldsymbol{u}^n_h\|^2_0 = \|\boldsymbol{u}^0_h\|^2_0 + \mathcal{O}(H^2), 
		\setlength{\belowdisplayskip}{0.2cm}
		$$
		where $\boldsymbol{e}_i(i=1,2,3)$ represents the $i$-th coordinate direction, $\boldsymbol{x}$ is spatial coordinate and $H$ denotes the length-scale of coarse mesh. 
	\end{Theorem}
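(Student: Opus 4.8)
The plan is to collapse the entire statement onto a single algebraic identity for the Newton-linearized inertial term and then recycle the machinery already used for the Stokes variant. Writing $\boldsymbol{\eta}^{n+1} := \boldsymbol{u}^{n+1}_h - \boldsymbol{u}^{n+1}_H$, I first note that the effective nonlinear term of the Newton step (\ref{TwoLevel-EMAC-Newton-Step2}) is
$$NL_{emac}(\boldsymbol{u}^{n+1}_H,\boldsymbol{u}^{n+1}_h,\boldsymbol{v}_h) = b_{emac}(\boldsymbol{u}^{n+1}_H,\boldsymbol{u}^{n+1}_h,\boldsymbol{v}_h) + b_{emac}(\boldsymbol{u}^{n+1}_h,\boldsymbol{u}^{n+1}_H,\boldsymbol{v}_h) - b_{emac}(\boldsymbol{u}^{n+1}_H,\boldsymbol{u}^{n+1}_H,\boldsymbol{v}_h).$$
Using only the bilinearity of $b_{emac}$ in its first two arguments (a polarization expansion of $b_{emac}(\boldsymbol{u}^{n+1}_H+\boldsymbol{\eta}^{n+1},\boldsymbol{u}^{n+1}_H+\boldsymbol{\eta}^{n+1},\boldsymbol{v}_h)$), I would establish the exact identity
$$NL_{emac}(\boldsymbol{u}^{n+1}_H,\boldsymbol{u}^{n+1}_h,\boldsymbol{v}_h) = b_{emac}(\boldsymbol{u}^{n+1}_h,\boldsymbol{u}^{n+1}_h,\boldsymbol{v}_h) - b_{emac}(\boldsymbol{\eta}^{n+1},\boldsymbol{\eta}^{n+1},\boldsymbol{v}_h).$$
This is the engine of the whole proof: the Newton-linearized term equals the \emph{exact} EMAC term minus a remainder that is \emph{quadratic} in the coarse-to-fine discrepancy $\boldsymbol{\eta}^{n+1}$.

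For momentum and angular momentum I would substitute $\boldsymbol{v}_h=\boldsymbol{e}_i$ and $\boldsymbol{v}_h=\boldsymbol{\phi}_i$ into this identity. Both terms on the right are of the diagonal form $b_{emac}(\boldsymbol{w},\boldsymbol{w},\cdot)$ with compactly supported $\boldsymbol{w}$ (namely $\boldsymbol{u}^{n+1}_h$ and $\boldsymbol{\eta}^{n+1}$, both supported in $\widehat{\Omega}$ by the strip assumption), and the computations already carried out in the proof of Theorem \ref{EMAC-TwoLevel(Stokes)} — invoking (\ref{PartInte-2-CONV}), (\ref{Rela-EMAC-CONV}), the facts $\nabla\cdot\boldsymbol{e}_i=\nabla\boldsymbol{e}_i=\boldsymbol{0}$, and $\nabla\cdot\boldsymbol{\phi}_i=0$ — show that each such diagonal term vanishes. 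Hence $NL_{emac}(\boldsymbol{u}^{n+1}_H,\boldsymbol{u}^{n+1}_h,\boldsymbol{e}_i)=0$ and likewise for $\boldsymbol{\phi}_i$, so telescoping the momentum and angular-momentum balances from step $0$ to $n$ yields \emph{exact} conservation, with no asymptotics needed.

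For the energy estimate I would take $\boldsymbol{v}_h=\boldsymbol{u}^{n+1}_h$ in the identity. The first term drops out by the EMAC energy property $b_{emac}(\boldsymbol{u}^{n+1}_h,\boldsymbol{u}^{n+1}_h,\boldsymbol{u}^{n+1}_h)=0$, leaving the per-step defect governed solely by the quadratic remainder $-b_{emac}(\boldsymbol{\eta}^{n+1},\boldsymbol{\eta}^{n+1},\boldsymbol{u}^{n+1}_h)$. Lemma \ref{Lemma-IneqBemac} gives $|b_{emac}(\boldsymbol{\eta}^{n+1},\boldsymbol{\eta}^{n+1},\boldsymbol{u}^{n+1}_h)| \lesssim \|\boldsymbol{\eta}^{n+1}\|_1^2\,\|\boldsymbol{u}^{n+1}_h\|_1$; combining the stability bound $\|\boldsymbol{u}^{n+1}_h\|_1 \lesssim C$ (from Theorem \ref{Theorem-Conv-Newton} and the regularity of the exact solution) with $\|\boldsymbol{\eta}^{n+1}\|_1 \lesssim H$ from (\ref{ConvRate-uh-uH}) (valid under $\Delta t \lesssim H^2$, its derivation now invoking Theorem \ref{Theorem-Conv-Newton} in place of the Stokes convergence result) produces a per-step defect of order $\Delta t\,H^2$. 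Summing the balance over the $n\le N$ steps and using $N\Delta t=T$ collapses the accumulated defect to $\mathcal{O}(H^2)$, which is the asserted high-order asymptotic energy conservation; the single extra power of $H$ relative to the Stokes result is exactly the quadratic, rather than linear, dependence on $\boldsymbol{\eta}^{n+1}$ inherited from the Newton linearization.

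The main obstacle I anticipate is not any single estimate but securing the identity together with its uniform-in-$n$ consequences. I must confirm that the polarization identity is exact (it is, relying only on bilinearity), that the diagonal EMAC conservation argument of Theorem \ref{EMAC-TwoLevel(Stokes)} transfers verbatim to the compactly supported fields $\boldsymbol{u}^{n+1}_h$ and $\boldsymbol{\eta}^{n+1}$ rather than to $\boldsymbol{u}^{n+1}_H$, and — for the energy bound — that both $\|\boldsymbol{u}^{n+1}_h\|_1 \lesssim C$ and $\|\boldsymbol{\eta}^{n+1}\|_1 \lesssim H$ hold uniformly in $n$ so that summing over $N=T/\Delta t$ steps does not degrade the rate. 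All of these are furnished by Lemma \ref{Lemma-IneqBemac}, Theorems \ref{Theorem-OneLevel-EMAC-Conv} and \ref{Theorem-Conv-Newton}, and the already-established (\ref{ConvRate-uh-uH}).
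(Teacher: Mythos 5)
Your proposal is correct and, for the energy part, is exactly the paper's argument: the paper also tests with $\boldsymbol{v}_h=\boldsymbol{u}^{n+1}_h$ and uses the bilinearity/skew-symmetry to rewrite the Newton-linearized term as $-b_{emac}(\boldsymbol{u}^{n+1}_h-\boldsymbol{u}^{n+1}_H,\boldsymbol{u}^{n+1}_h-\boldsymbol{u}^{n+1}_H,\boldsymbol{u}^{n+1}_h)$, then bounds this by $\|\boldsymbol{u}^{n+1}_h-\boldsymbol{u}^{n+1}_H\|_1^2\|\boldsymbol{u}^{n+1}_h\|_1\lesssim H^2$ via (\ref{ConvRate-uh-uH}) under $\Delta t\lesssim H^2$. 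Where you differ is in the momentum and angular-momentum steps: you apply the polarization identity $NL^{Newton}_{emac}(\boldsymbol{u}^{n+1}_H,\boldsymbol{u}^{n+1}_h,\boldsymbol{v}_h)=b_{emac}(\boldsymbol{u}^{n+1}_h,\boldsymbol{u}^{n+1}_h,\boldsymbol{v}_h)-b_{emac}(\boldsymbol{\eta}^{n+1},\boldsymbol{\eta}^{n+1},\boldsymbol{v}_h)$ uniformly to all test functions and reduce everything to the vanishing of diagonal terms $b_{emac}(\boldsymbol{w},\boldsymbol{w},\widehat{\boldsymbol{e}_i})$ and $b_{emac}(\boldsymbol{w},\boldsymbol{w},\widehat{\boldsymbol{\phi}_i})$ already established in the proof of Theorem \ref{EMAC-TwoLevel(Stokes)} (an argument generic in the compactly supported field $\boldsymbol{w}$, hence valid for $\boldsymbol{u}^{n+1}_h$ and $\boldsymbol{\eta}^{n+1}$), whereas the paper treats the three off-diagonal pieces of $NL^{Newton}_{emac}$ directly, using (\ref{PartInte-CONV}) to derive the constant-test-function identities $b_{conv}(\boldsymbol{u},\boldsymbol{v},\boldsymbol{c})=-((\nabla\cdot\boldsymbol{u})\boldsymbol{v},\boldsymbol{c})$ and $b_{conv}(\boldsymbol{c},\boldsymbol{v},\boldsymbol{w})=-b_{conv}(\boldsymbol{c},\boldsymbol{w},\boldsymbol{v})$ together with (\ref{Rela-2-EMAC-CONV}), and the analogue for $\boldsymbol{\phi}_i$. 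Your organization is slightly more unified --- one exact identity drives all three conservation statements and makes transparent why the Newton defect is quadratic in $\boldsymbol{\eta}^{n+1}$ while the Stokes defect is only linear --- at the cost of having to verify that the diagonal-vanishing computation transfers to arbitrary compactly supported first arguments, which it does. Both routes are sound and yield the same conclusions.
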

	\begin{proof}
		\vspace{-0.5cm}
		Similarly, we get the specific trilinear term as
		$$
		\setlength{\abovedisplayskip}{0.2cm}
		\begin{aligned}
			NL_{emac}(\boldsymbol{u}^{n+1}_H,\boldsymbol{u}^{n+1}_h,\boldsymbol{v}_h) 
			&= b_{emac}(\boldsymbol{u}^{n+1}_H,\boldsymbol{u}^{n+1}_h,\boldsymbol{v}_h) + b_{emac}(\boldsymbol{u}^{n+1}_h,\boldsymbol{u}^{n+1}_H,\boldsymbol{v}_h) - b_{emac}(\boldsymbol{u}^{n+1}_H,\boldsymbol{u}^{n+1}_H,\boldsymbol{v}_h)  \\
			&\triangleq NL^{Newton}_{emac}(\boldsymbol{u}^{n+1}_H,\boldsymbol{u}^{n+1}_h,\boldsymbol{v}_h).
		\end{aligned}
	    \setlength{\belowdisplayskip}{0.2cm}
		$$
		We use (\ref{PartInte-CONV}) twice to deduce that, for some constant $\boldsymbol{c}$, $b_{conv}(\boldsymbol{u},\boldsymbol{v},\boldsymbol{c}) = -((\nabla\cdot\boldsymbol{u})\boldsymbol{v},\boldsymbol{c})$ and $b_{conv}(\boldsymbol{c},\boldsymbol{v},\boldsymbol{w}) = -b_{conv}(\boldsymbol{c},\boldsymbol{w},\boldsymbol{v})$. Thus, using (\ref{Rela-2-EMAC-CONV}) and these two identities, we obtain 
		$$
		\setlength{\abovedisplayskip}{0.2cm}
		NL^{Newton}_{emac}(\boldsymbol{u}^{n+1}_H,\boldsymbol{u}^{n+1}_h,\boldsymbol{e}_i) = 0.
		\setlength{\belowdisplayskip}{0.2cm}
		$$
		To see angular momentum conservation, we firstly use $\nabla\cdot\boldsymbol{\phi}_i = \nabla\cdot(\boldsymbol{x}\times\boldsymbol{e}_i)=0$ to get $b_{conv}(\boldsymbol{\phi}_i,\boldsymbol{v},\boldsymbol{w}) = -b_{conv}(\boldsymbol{\phi}_i,\boldsymbol{w},\boldsymbol{v})$, therefore, by direct calculation we can deduce 
		$$
		\setlength{\abovedisplayskip}{0.2cm}
		NL^{Newton}_{emac}(\boldsymbol{u}^{n+1}_H,\boldsymbol{u}^{n+1}_h,\boldsymbol{\phi}_i) = 0.
		\setlength{\belowdisplayskip}{0.2cm}
		$$
		As for the energy conservation, we test with $\boldsymbol{v}_h=\boldsymbol{u}^{n+1}_h$ and using skew-symmetry property to get  
		$$
		\setlength{\abovedisplayskip}{0.2cm}
			NL^{Newton}_{emac}(\boldsymbol{u}^{n+1}_H,\boldsymbol{u}^{n+1}_h,\boldsymbol{\phi}_i) = 
			- b_{emac}(\boldsymbol{u}^{n+1}_h-\boldsymbol{u}^{n+1}_H,\boldsymbol{u}^{n+1}_h-\boldsymbol{u}^{n+1}_H,\boldsymbol{u}^{n+1}_h). 
	    \setlength{\belowdisplayskip}{0.2cm}
		$$
		Thus, assuming $\Delta t \lesssim H^2$ and using (\ref{ConvRate-uh-uH}), we can bound the trilinear term above as 
		$$
		\setlength{\abovedisplayskip}{0.2cm}
		|b_{emac}(\boldsymbol{u}^{n+1}_h-\boldsymbol{u}^{n+1}_H,\boldsymbol{u}^{n+1}_h-\boldsymbol{u}^{n+1}_H,\boldsymbol{v}_h)|
		\lesssim \|\boldsymbol{u}^{n+1}_h-\boldsymbol{u}^{n+1}_H\|_1^2\|\boldsymbol{u}^{n+1}_h\|_1
		\lesssim H^2,
		\setlength{\belowdisplayskip}{0.2cm}
		$$
		which means the trilinear term, when estimating energy, is $\mathcal{O}(H^2)$-small if we assume $\Delta t \lesssim H^2$:
		$$
		\setlength{\abovedisplayskip}{0.2cm}
		b_{emac}(\boldsymbol{u}^{n+1}_h-\boldsymbol{u}^{n+1}_H,\boldsymbol{u}^{n+1}_h-\boldsymbol{u}^{n+1}_H,\boldsymbol{v}_h) = \mathcal{O}(H^2).
		\setlength{\belowdisplayskip}{0.2cm}
		$$
		Thus, this completes the proof. 
	\vspace{-0.5cm}
    \end{proof}
    \vspace{-0.5cm}
	\section{Numerical experiments}\label{section-4}
	\vspace{-0.5cm}
	In this section, three numerical experiments will be presented. We adopt Taylor-Hood elements and BDF2 time stepping in these three experiments, and we remark that this BDF2 EMAC-TwoLevel(Newton/Stokes) is used for the sake of balancing between computational efficiency and optimal numerical accuracy and would not damage the conclusions about the (asymptotic) conservative properties. Moreover, we point out that only the results of EMAC-TwoLevel(Newton) are presented, and similar results about EMAC-TwoLevel(Stokes) are also obtained but are omitted here for brevity.
	\vspace{-0.5cm}
	\subsection{Convergence rate verification}
	\vspace{-0.5cm}
	The first test is taken from \cite{MLG-2015-SISC} on $\Omega = [0,1]^2$, and the exact solution takes the form:
	$$
	\setlength{\abovedisplayskip}{0.2cm}
	\begin{aligned}
		\boldsymbol{u}(\boldsymbol{x}, t) &= [\sin ^2 \pi x \sin 2 \pi y, -\sin ^2 \pi y \sin 2 \pi x]^{\text{T}}\cdot(1+\sin \pi t), \\
		p(\boldsymbol{x}, t)   &= (1+\sin \pi t) \cos \pi x \cos \pi y,
	\end{aligned}
    \setlength{\belowdisplayskip}{0.2cm}
	$$
    with $\nu=1$, $T=1$, and $\boldsymbol{f}$ is obtained by direct calculation. We take $\Delta t = h^{3/2}$ and define two types of pressure approximation errors and convergence rates. The first one is $\|p^n_{primal} - p^n_{h,primal}\|_{L^2}$ that is raised to relate to primal pressure variable $p_{primal}(t):=p(t)$ in continuous NSEs (\ref{ContEqua}) and $p^n_{h,primal}$ is recovered \emph{a posteriori} from discrete EMAC pressure $p^n_{h}$ in (\ref{TwoLevel-EMAC-Stokes-Step2}) or (\ref{TwoLevel-EMAC-Newton-Step2}) by $p^n_{h,primal} := p^n_{h} + 1/2\cdot|\boldsymbol{u}^n_h|^2$; the second is $\|p^n_{emac} - p^n_{h,emac}\|_{L^2}$, where $p_{emac}(\boldsymbol{x},t) := p(\boldsymbol{x},t) - 1/2\cdot|\boldsymbol{u}^n(\boldsymbol{x},t)|^2 + \lambda$ is defined \emph{a priori} and $\lambda := \int_{\Omega}1/2\cdot|\boldsymbol{u}(\cdot,t)|^2\text{d}x$, this form is relevant to the numerical EMAC pressure $p^n_{h,emac} := p^n_{h}$ obtained in (\ref{TwoLevel-EMAC-Stokes-Step2}) or (\ref{TwoLevel-EMAC-Newton-Step2}). The errors and optimal rates about velocity and two types of pressure from  \autoref{FEVeloConvOrder} confirm our theoretical estimates in Theorem \autoref{Theorem-Conv-Stokes} and Theorem \ref{Theorem-Conv-Newton}.
    \vspace{-0.2cm}
	\begin{table}
		\centering
		\fontsize{10}{10}
		\begin{threeparttable}
			\caption{Errors and convergence orders for EMAC-TwoLevel(Newton) scheme with $\Delta t = h^{3/2}$, $\nu=1$.}\label{FEVeloConvOrder}
			\begin{tabular}{c|c|c|c|c|c|c|c|c|c}
				\toprule
				\multirow{2.5}{*}{1/h} & \multirow{2.5}{*}{1/H} & \multicolumn{2}{c}{$\left\|\boldsymbol{u}^N - \boldsymbol{u}^N_h\right\|_{L^2}$} & \multicolumn{2}{c}{$\left\|\boldsymbol{u}^N - \boldsymbol{u}^N_h\right\|_{H^1}$} & \multicolumn{2}{c}{$\left\|p^N_{primal} - p^N_{h,primal}\right\|_{L^2}$} & \multicolumn{2}{c}{$\left\|p^N_{emac} - p^N_{h,emac}\right\|_{L^2}$} \cr
				\cmidrule(lr){3-4} \cmidrule(lr){5-6} \cmidrule(lr){7-8}  \cmidrule(lr){9-10}
				&  & error & rate & error & rate & error & rate & error & rate \cr
				\midrule
				4  & 2 &  1.6283e$-$02  &  -     &  5.0917e$-$01   & -     &  1.2137e$-$01   & -    &  1.2740e$-$01   & -    \cr
				16 & 4 &  2.4919e$-$04  &  3.02  &  3.5767e$-$02   & 1.92  &  7.5670e$-$03   & 2.00 &  2.5078e$-$03   & 2.83 \cr
				36 & 6 &  2.2059e$-$05  &  2.99  &  7.1190e$-$03   & 1.99  &  1.5602e$-$03   & 1.95 &  3.8980e$-$04   & 2.30 \cr
				64 & 8 &  3.9393e$-$06  &  2.99  &  2.2555e$-$03   & 2.00  &  4.9727e$-$04   & 1.99 &  1.1803e$-$04   & 2.08 \cr
				\bottomrule
			\end{tabular}
		\end{threeparttable}
	\vspace{-0.5cm}
	\end{table}
    \vspace{-0.2cm}
	\subsection{Lattice vortex problem}
	\vspace{-0.5cm}
	The lattice vortex problem will be considered as the second test problem, and it was also commonly used in EMAC-related research(see, e.g., \cite{Pres-Re-Robust-SV-UNSE-2017-JNM,Linearized-EMAC-ANM-2019}) since this system holds the conserved zeros momentum and angular momentum all the time. This is a challenging problem because there are several spinning vortices whose edges touch, which renders it difficult to be numerically resolved. The true solution takes the form of
	$$
	\setlength{\abovedisplayskip}{0.2cm}
	\begin{aligned}
		\boldsymbol{u} &= [\sin (2 \pi x) \sin (2 \pi y), \cos (2 \pi x) \cos (2 \pi y)]^{\text{T}}\cdot\exp ^{-8 \nu \pi^2 t} , \\
		p &= -\frac{1}{2}\left(\sin ^2(2 \pi x)+\cos ^2(2 \pi y)\right)  \exp ^{-16 \pi^2 t},
	\end{aligned}
    \setlength{\belowdisplayskip}{0.2cm}
	$$
	with $\boldsymbol{f}=\boldsymbol{0}$ and $\nu = 10^{-7}$, $\boldsymbol{u}_0 := \boldsymbol{u}(\cdot,0)$. This test is performed on $\Omega = [0,1]^2$ with time up to $T=5$. \\
	\indent We simulate this problem by two-level scheme with Newton correction, and to show this scheme is indeed preserving energy, momentum, and angular momentum obviously, we also supply the simulations about other four common-used formulations linearized by the same two-level methods so as to give a contrast to EMAC form. Apart form the forms of EMAC and Conv defined before, the other three(Skew, Rota, and Dive ) formulations are defined respectively as
	 $$
	 \setlength{\abovedisplayskip}{0.2cm}
	 \begin{aligned}
	 	b_{\text{skew}}(\boldsymbol{u},\boldsymbol{v},\boldsymbol{w}) &:= \frac12((\boldsymbol{u}\cdot \nabla)\boldsymbol{v},\boldsymbol{w}) - \frac12((\boldsymbol{u}\cdot \nabla)\boldsymbol{w},\boldsymbol{v}),  \\
	 	b_{\text{rota}}(\boldsymbol{u},\boldsymbol{v},\boldsymbol{w}) &:= ((\nabla \times \boldsymbol{u})\times \boldsymbol{v},\boldsymbol{w}),  \\
	 	b_{\text{dive}}(\boldsymbol{u},\boldsymbol{v},\boldsymbol{w}) &:= (\nabla \cdot (\boldsymbol{u} \otimes \boldsymbol{v}),\boldsymbol{w}).
	 \end{aligned}
     \setlength{\belowdisplayskip}{0.2cm}
	 $$
    The coarse and fine mesh, with $H=1/18,h=1/36$, are obtained by successively refined partitioning of $\Omega$ with SWNE diagonals and time-step $\Delta t = 0.01$. \autoref{Figure-FiveConv} shows results of energy, momentum, angular momentum, and $L^2$ velocity error versus time for each of the different formulations. We see that, except for EMAC, the other four forms all become unstable and blow up.
    \vspace{-0.5cm}
    \begin{figure}
    	\centering
    	\includegraphics[width=1\linewidth]{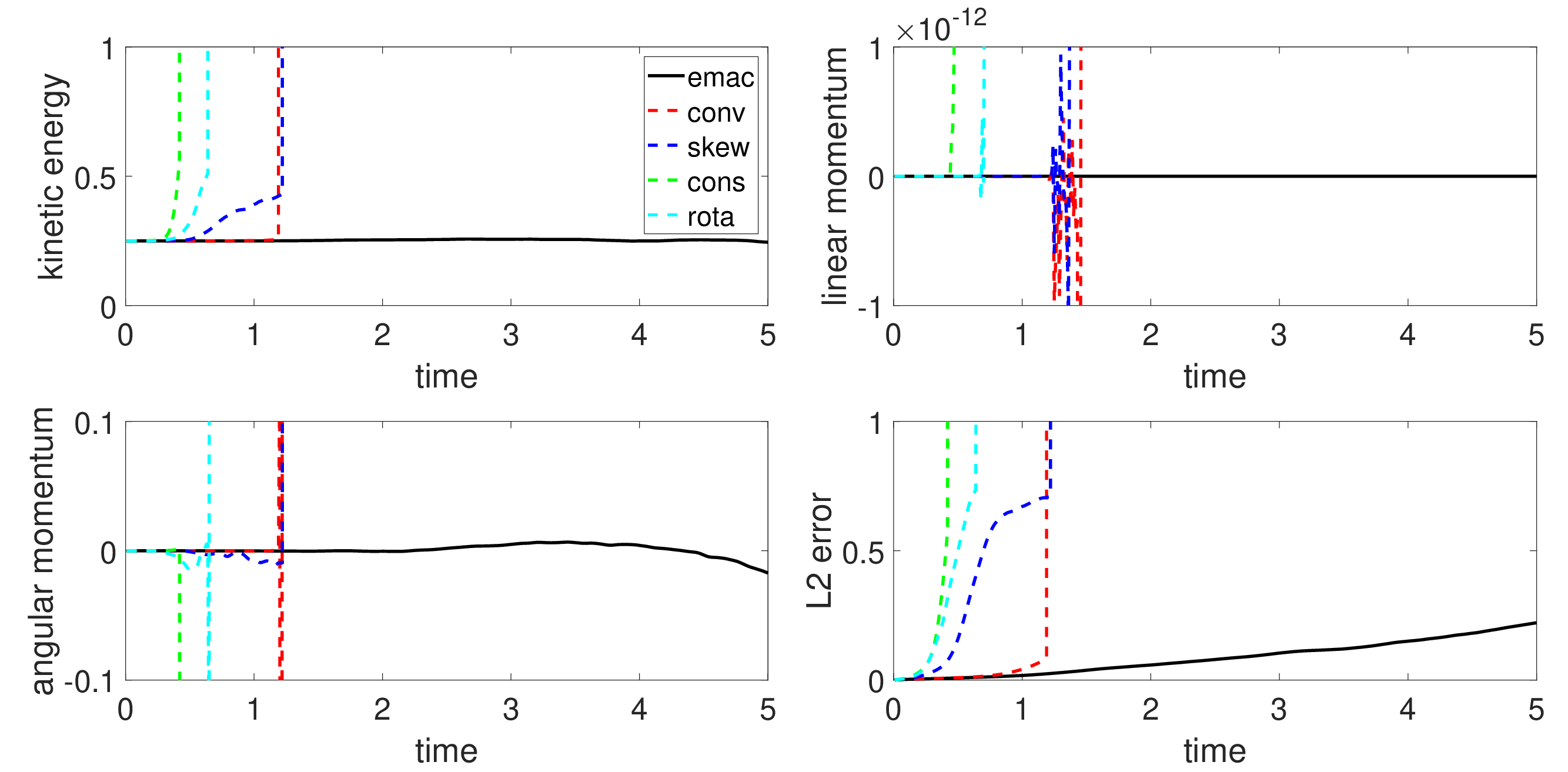}
    	\caption{Shown above are plots of time versus energy, momentum, angular momentum, and $L^2$ velocity error, for EMAC, Conv, Skew, Rota, and Dive formulations, for lattice vortex problem.}\label{Figure-FiveConv}
    \end{figure}
	\subsection{Flow past a cylinder in 2D}
	\vspace{-0.5cm}
	Here, we present a benchmark test to validate our numerical schemes. The simulation is a 2D channel flow past a cylinder on a channel with a hole $\Omega=\{(0,2.2) \times(0,0.41)\} \backslash\left\{\boldsymbol{x}:(\boldsymbol{x}-(0.2,0.2))^2 \leq 0.05^2\right\}$. There is no external forcing, i.e., $\boldsymbol{f}=\boldsymbol{0}$, and $\nu = 10^{-3}$. The parabolic inflow and outflow profile
	$$
	\setlength{\abovedisplayskip}{0.2cm}
	\boldsymbol{u}(0, y)=\boldsymbol{u}(2.2, y)=0.41^{-2}(1.2 y(0.41-y), 0), \quad 0 \leq y \leq 0.41,
	\setlength{\belowdisplayskip}{0.2cm}
	$$
	is prescribed, no-slip conditions are imposed at the other boundaries.  \\
	\indent The benchmark parameters are the drag and lift coefficients at the cylinder, and in order to reduce the negative effect of line integral on the boundary of the cylinder, we utilize the mean values of the inflow velocity $\bar{U}=1$ and the diameter of the cylinder $D=0.1$ to adopt the volume integral defined in \cite{benchmark-John-2001-IJNMF} as 
	$$
	\setlength{\abovedisplayskip}{0.2cm}
	\begin{aligned}
		c_d(t) & =-\frac{2}{D \bar{U}^2}\left[\nu\left(\nabla \boldsymbol{u}(t), \nabla \boldsymbol{v}_d\right)+b_{EMAC}\left(\boldsymbol{u}(t), \boldsymbol{u}(t), \boldsymbol{v}_d\right)-\left(p(t), \nabla \cdot \boldsymbol{v}_d\right)\right], 
	\end{aligned}
\setlength{\belowdisplayskip}{0.2cm}
$$
		$$
		\setlength{\abovedisplayskip}{0.2cm}
		\begin{aligned}
		c_l(t) & =-\frac{2}{D \bar{U}^2}\left[\nu\left(\nabla \boldsymbol{u}(t), \nabla \boldsymbol{v}_l\right)+b_{EMAC}\left(\boldsymbol{u}(t), \boldsymbol{u}(t), \boldsymbol{v}_l\right)-\left(p(t), \nabla \cdot \boldsymbol{v}_l\right)\right],
	\end{aligned}
	\setlength{\belowdisplayskip}{0.2cm}
	$$
	for arbitrary functions $\boldsymbol{v}_d \in \boldsymbol{H}^1$(resp. $\boldsymbol{v}_l \in \boldsymbol{H}^1$) such that $\boldsymbol{v}_d = (1, 0)^{\text{T}}$(resp. $\boldsymbol{v}_l = (0, 1)^{\text{T}}$) on the boundary of the cylinder and vanishes on the other boundaries. Another relevant quantity of interest is the pressure difference between the front and back of the cylinder 
	$
	\setlength{\abovedisplayskip}{0.2cm}
	\Delta p(t) = p(0.15,0.2,t) - p(0.25,0.2,t).
	\setlength{\belowdisplayskip}{0.2cm}
	$
	The following \autoref{FlowPast} shows the development of those three quantities which demonstrate the effectiveness of our proposed schemes after being compared with the reference values in \cite{benchmark-John-2004-IJNMF}.
    \begin{figure}[htbp!]
    	\centering
    	\includegraphics[width=1\linewidth]{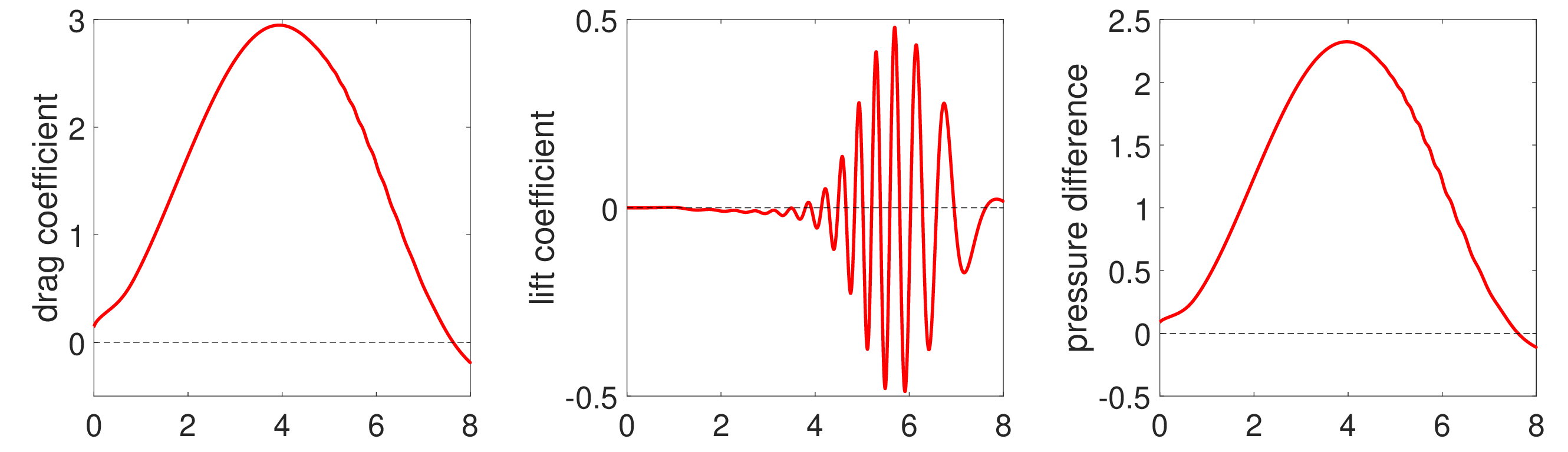}
    	\caption{Evolution of $c_d(t)$, $c_l(t)$ and $\Delta p(t)$ in the simulation of flow past a cylinder with EMAC-TwoLevel(Newton) scheme.}\label{FlowPast}
    \end{figure}
    \vspace{-0.8cm}
    \section{Conclusions}
    \vspace{-0.5cm}
	We have studied the conservation properties of NSEs, written in EMAC formulation, after being linearized by two two-level methods. In particular, we have proven that the two-level methods, discretized from EMAC NSEs and equipped with appropriate correction steps, could preserve momentum, angular momentum, and asymptotically preserve energy. Theoretical results have been provided, as well as three numerical experiments. The first one has validated the optimal convergence rates, which especially includes two newly proposed pressure approximations in EMAC NSEs. The second test has shown that the two-level methods, equipped with appropriate correction, can indeed preserve energy, momentum, and angular momentum. Finally, the last benchmark test has confirmed the effectiveness of our new numerical schemes. In the future, we plan to complement the EMAC pressure's theoretical error estimates and to extend this methodology to other conservative and efficient linearized methods for NSEs.
	\vspace{-0.8cm}
	\section*{Data availability}
	\vspace{-0.8cm}
	Data will be made available on reasonable request.
	\vspace{-0.8cm}
	\section*{Declaration of competing interest}
	\vspace{-0.8cm}
	The authors declare that they have no known competing financial interests or personal relationships.
	\vspace{-0.8cm}
	\section*{CRediT authorship contribution statement}
	\vspace{-0.8cm}
	\textbf{Xi Li:} Conceptualization, Methodology, Software, Validation, Writing – original draft. \textbf{Minfu Feng: }Conceptualization, Funding acquisition, Methodology, Writing – review \& editing. 
    \vspace{-0.5cm}
	

	\bibliographystyle{abbrv}
	\bibliography{Reference}	

\begin{thebibliography}{10}

\bibitem{TwoLevel-ThreeStep-UNSE-2017-JNM}
S.~Bajpai and A.~K. Pani.
\newblock On three steps two-grid finite element methods for the 2{D}-transient
  {N}avier-{S}tokes equations.
\newblock {\em J. Numer. Math.}, 25(4):199--228, 2017.
\newblock doi:{\color{blue} \href{https://doi.org/10.1515/jnma-2016-1055}
  {10.1515/jnma-2016-1055}}.

\bibitem{MLG-2015-SISC}
R.~Bermejo and L.~Saavedra.
\newblock Modified {L}agrange-{G}alerkin methods to integrate time dependent
  incompressible {N}avier-{S}tokes equations.
\newblock {\em SIAM J. Sci. Comput.}, 37(6):B779--B803, 2015.
\newblock doi:{\color{blue} \href{https://doi.org/10.1137/140973967}
  {10.1137/140973967}}.

\bibitem{Brenner2008}
S.~C. Brenner and L.~R. Scott.
\newblock {\em The mathematical theory of finite element methods}, volume~15 of
  {\em Texts in Applied Mathematics}.
\newblock Springer, New York, third edition, 2008.
\newblock doi:{\color{blue} \href{https://doi.org/10.1007/978-0-387-75934-0}
  {10.1007/978-0-387-75934-0}}.

\bibitem{EMAC-2017-JCP}
S.~Charnyi, T.~Heister, M.~A. Olshanskii, and L.~G. Rebholz.
\newblock On conservation laws of {N}avier-{S}tokes {G}alerkin discretizations.
\newblock {\em J. Comput. Phys.}, 337:289--308, 2017.
\newblock doi:{\color{blue} \href{https://doi.org/10.1016/j.jcp.2017.02.039}
  {10.1016/j.jcp.2017.02.039}}.

\bibitem{Linearized-EMAC-ANM-2019}
S.~Charnyi, T.~Heister, M.~A. Olshanskii, and L.~G. Rebholz.
\newblock Efficient discretizations for the {EMAC} formulation of the
  incompressible {N}avier-{S}tokes equations.
\newblock {\em Appl. Numer. Math.}, 141:220--233, 2019.
\newblock doi:{\color{blue} \href{https://doi.org/10.1016/j.apnum.2018.11.013}
  {10.1016/j.apnum.2018.11.013}}.

\bibitem{TwoLevel-UNSE-JCM-2004-2}
Y.-n. He, H.-l. Miao, and C.-f. Ren.
\newblock A two-level finite element {G}alerkin method for the nonstationary
  {N}avier-{S}tokes equations. {II}. {T}ime discretization.
\newblock {\em J. Comput. Math.}, 22(1):33--54, 2004.
\newblock {\color{blue}
  \href{https://www.global-sci.org/intro/article_detail/jcm/8849.html}
  {https://www.global-sci.org/intro/article\_detail/jcm/8849.html}}.

\bibitem{Heywood-Rannacher-1982-SINUM-1}
J.~G. Heywood and R.~Rannacher.
\newblock Finite element approximation of the nonstationary {N}avier-{S}tokes
  problem. {I}. {R}egularity of solutions and second-order error estimates for
  spatial discretization.
\newblock {\em SIAM J. Numer. Anal.}, 19(2):275--311, 1982.
\newblock doi:{\color{blue} \href{https://doi.org/10.1137/0719018}
  {10.1137/0719018}}.

\bibitem{benchmark-John-2004-IJNMF}
V.~John.
\newblock Reference values for drag and lift of a two-dimensional
  time-dependent flow around a cylinder.
\newblock {\em Int. J. Numer. Methods Fluids}, 44(7):777--788, 2004.
\newblock doi:{\color{blue} \href{https://doi.org/10.1002/fld.679}
  {10.1002/fld.679}}.

\bibitem{benchmark-John-2001-IJNMF}
V.~John and G.~Matthies.
\newblock Higher-order finite element discretizations in a benchmark problem
  for incompressible flows.
\newblock {\em Int. J. Numer. Methods Fluids}, 37(8):885--903, 2001.
\newblock doi:{\color{blue} \href{https://doi.org/10.1002/fld.195}
  {10.1002/fld.195}}.

\bibitem{TwoLevel-NSE-Layton-Tabiska-1998-SINUM}
W.~Layton and L.~Tobiska.
\newblock A two-level method with backtracking for the {N}avier-{S}tokes
  equations.
\newblock {\em SIAM J. Numer. Anal.}, 35(5):2035--2054, 1998.
\newblock doi:{\color{blue} \href{https://doi.org/10.1137/S003614299630230X}
  {10.1137/S003614299630230X}}.

\bibitem{Helicity-ARFM-1992}
H.~K. Moffatt and A.~Tsinober.
\newblock Helicity in laminar and turbulent flow.
\newblock In {\em Annual review of fluid mechanics, {V}ol. 24}, pages 281--312.
  Annual Reviews, Palo Alto, CA, 1992.
\newblock doi:{\color{blue}
  \href{https://www.annualreviews.org/doi/10.1146/annurev.fl.24.010192.001433}
  {10.1146/annurev.fl.24.010192.001433}}.

\bibitem{LongerAccu-2020-CMAME}
M.~A. Olshanskii and L.~G. Rebholz.
\newblock Longer time accuracy for incompressible {N}avier-{S}tokes simulations
  with the {EMAC} formulation.
\newblock {\em Comput. Methods Appl. Mech. Engrg.}, 372:113369, 17, 2020.
\newblock doi:{\color{blue} \href{https://doi.org/10.1016/j.cma.2020.113369}
  {10.1016/j.cma.2020.113369}}.

\bibitem{Pres-Re-Robust-SV-UNSE-2017-JNM}
P.~W. Schroeder and G.~Lube.
\newblock Pressure-robust analysis of divergence-free and conforming {FEM} for
  evolutionary incompressible {N}avier-{S}tokes flows.
\newblock {\em J. Numer. Math.}, 25(4):249--276, 2017.
\newblock doi:{\color{blue} \href{https://doi.org/10.1515/jnma-2016-1101}
  {10.1515/jnma-2016-1101}}.

\bibitem{Proj-Shen-1992-NM}
J.~Shen.
\newblock On error estimates of some higher order projection and
  penalty-projection methods for {N}avier-{S}tokes equations.
\newblock {\em Numer. Math.}, 62(1):49--73, 1992.
\newblock doi:{\color{blue} \href{https://doi.org/10.1007/BF01396220}
  {10.1007/BF01396220}}.

\bibitem{TwoLevel-1994-SISC}
J.~Xu.
\newblock A novel two-grid method for semilinear elliptic equations.
\newblock {\em SIAM J. Sci. Comput.}, 15(1):231--237, 1994.
\newblock doi:{\color{blue} \href{https://doi.org/10.1137/0915016}
  {10.1137/0915016}}.

\end{thebibliography}
\end{document}